\renewcommand{\paragraph}[1]{\subsubsection{#1}}
\renewcommand{\cases}[1]{\left\{ \begin{array}{rl} #1 \end{array} \right.}
\def\XXint#1#2#3{{\setbox0=\hbox{$#1{#2#3}{\int}$ }
\vcenter{\hbox{$#2#3$ }}\kern-.6\wd0}}
\def\b{\big}
\def\B{\Big}
\def\bg{\bigg}
\def\sep{\,|\,}
\def\bsep{\,\b|\,}
\def\id{{\rm id}}
\def\R{\mathbb{R}}
\def\N{\mathbb{N}}
\def\Z{\mathbb{Z}}
\def\WW{W}
\def\LL{L}
\def\dx{\,{\rm d}x}
\def\dy{\,{\rm d}y}
\def\<{\langle}
\def\>{\rangle}
\def\mA{{\sf A}}
\def\D{\nabla}
\def\loc{{\rm loc}}
\def\L{{\Z^d}}
\def\Ws{\mathscr{W}}
\def\Us{\mathscr{U}}
\def\Usz{\Us_0}
\def\Ys{\mathscr{Y}}
\def\yA{y_\mA}
\def\Om{{\R^d}}
\def\tilu{\tilde u}
\def\tilv{\tilde v}
\def\tilw{\tilde w}
\def\barv{\bar v}
\def\barw{\bar w}
\def\baru{\bar u}
\def\zz{\bar{\zeta}}
\def\zzz{\tilde{\zeta}}
\def\Stil{\tilde{\mathscr{S}}}
\def\QQ{\mathscr{Q}}
\def\TT{\mathscr{T}}
\def\ii{\imath}
\definecolor{cocol}{rgb}{0.7, 0, 0}
\definecolor{ascol}{rgb}{0, 0, 0.7}
\def\Conv{\mathscr{C}}
\def\Fourier{\mathscr{F}}
\begin{document}

\title[Interpolants of Lattice Functions]{Interpolants of Lattice
  Functions for the Analysis of Atomistic/Continuum Multiscale
  Methods}

\author{C. Ortner}
\address{C. Ortner\\ Mathematics Institute \\ Zeeman Building \\
  University of Warwick \\ Coventry CV4 7AL \\ UK}
\email{c.ortner@warwick.ac.uk}

\author{A. V. Shapeev}
\address{A. V. Shapeev \\ School of Mathematics \\ University of
  Minnesota \\ 127 Vincent Hall \\ 206 Church St SE \\ Minneapolis \\
  MN 55455 \\ USA }
\email{alexander@shapeev.com}


\date{\today}

\thanks{This work was supported by the EPSRC Critical Mass Programme
  ``New Frontiers in the Mathematics of Solids'' (OxMoS) and by the
  EPSRC grant EP/H003096 ``Analysis of atomistic-to-continuum coupling
  methods''.}

\subjclass[2000]{65N12, 65N15, 70C20, XXXXX}

\keywords{interpolation, atomistic models, coarse graining, Cauchy--Born model}

\begin{abstract}
  We introduce a general class of (quasi-)interpolants of functions
  defined on a Bravais lattice, and establish several technical
  results for these interpolants that are crucial ingredients in the
  analysis of atomistic models and atomistic/continuum multi-scale
  methods.
\end{abstract}

\maketitle


\section{Introduction}
In this report, we describe several classes of continuous interpolants
or quasi-interpolants of lattice functions that have proven useful in
recent and ongoing work \cite{OrSh:inf, OrTh:2012, OrVK:bqce_1}
for the analysis of atomistic models of crystalline solids and for for
the construction and analysis of atomistic/continuum multiscale
methods \cite{Ortiz:1995a, Shenoy:1999a, OrtnerShapeev:2011pre,
  Or:2011a, Shapeev:2010a}.

A key ingredient that made the analyses in \cite{OrSh:inf,
  OrTh:2012, OrVK:bqce_1, OrtnerShapeev:2011pre, Or:2011a} possible is
the construction of continuous representations of discrete object. A
novel recent idea that allowed substantial progress in the direction
of such constructions is the {\em bond density lemma}
\cite{Shapeev:2010a}, which allows explicitly compute the ``density of
bonds'' in a certain subsets of $\R^d$. The lattice interpolants we
define and analyze in this paper are designed to go hand in hand with
these constructions.

Although many results related to the ones we present here can be found
in the numerical analysis literature, we have not found the precise
statements we require or the generality that we seek (as a special
case our analysis covers multi-linear and multi-cubic splines for
which many of our results are standard). Hence, we present a complete
derivation of all results. Moreover, our function space setting seems
new as well.



\section{Interpolation of Lattice Functions}
\label{sec:prelims}
We fix a space dimension $d \in \N$ and a range dimension $m \in
\N$. The goal of this section, and core of this paper, is to introduce
spaces of discrete and continous functions, respectively, on the {\em
  continous domain} $\R^d$ and the {\em discrete domain} $\Z^d$, and
to provide tools to transition between these two classes.


\subsection{Interpolants of lattice functions}
\label{sec:interp}
We denote the set of all vector-valued lattice functions by
\begin{displaymath}
  \Us := \b\{ v : \Z^d \to \R^m \b\}.
\end{displaymath}
To facilitate the transition between continuous and discrete maps we
introduce two (quasi-)interpolants of lattice functions, and will
later introduce a third nodal interpolant.

Our starting point is a nodal basis function $\zz \in
\WW^{1,\infty}(\R^d; \R)$ associated with the origin; that is $\zz(0)
= 1$ and $\zz(\xi) = 0$ for all $\xi \in \Z^d \setminus \{0\}$. There
is considerable freedom in the choice of $\zz$, however, certain
choices are particularly natural; see Section \ref{sec:examples}. The
nodal basis function associated with $\xi \neq 0$ is the shifted
function $\zz(\bullet - \xi)$, which yields the interpolant
\begin{equation}
  \label{eq:interp:S1_interp}
  \barv(x) := \sum_{\xi \in \Z^d} v(\xi) \zz(x - \xi), \qquad
  \text{for } v \in \Us.
\end{equation}

Next, we define a quasi-interpolant obtained through convolution of
$\barv$ with $\zz$:
\begin{equation}
  \label{eq:interp:defn_tildev}
  \tilv(x) := (\zz \ast \barv)(x) = \int_{\R^d} 
  \zz(x - x') \barv(x') \dx'.
\end{equation}
We call $\tilv$ a quasi-interpolant since, in general, $v(\xi) \neq
\tilv(\xi)$. This can be seen from the alternative definition
\begin{equation}
  \label{eq:interp:defn_tildev_2}
  \tilv(x) = \sum_{\xi \in \Z^d} v(\xi) \zzz(x-\xi), 
  \qquad \text{where} \quad  \zzz(x) := (\zz \ast \zz)(x).
\end{equation}

For future reference we define the support sets
\begin{displaymath}
  \bar{\omega}_\xi := {\rm supp}\b(\zz(\bullet - \xi)\b), \quad
  \text{and} \quad
  \tilde{\omega}_\xi := {\rm supp}\b(\zzz(\bullet - \xi)\b).
\end{displaymath}

Throughout, we make the following standing assumptions on $\zz$:
\begin{itemize}
\item[(Z1)] {\it Regularity: } $\zz \in W^{1,\infty}$
\item[(Z2)] {\it Locality: } $\bar{\omega}_\xi$ (and hence
  $\tilde{\omega}_\xi$) is compact
\item[(Z3)] {\it Order: } $\sum_{\xi \in \Z^d} \zz(x - \xi) (a + b
  \cdot \xi) = a + b \cdot x$ for all $a \in \R, b \in \R^d$.
\item[(Z4)] {\it Invertibility: } $\zz(0) = 1$ and $\zz(\xi) = 0$ for
  all $\xi \in \Z^d \setminus \{0\}$.
\end{itemize}

\begin{remark}
  1. The assumption (Z3) ensures that interpolants of affine functions
  are again affine: if $u(\xi) = a + b \cdot \xi$, then $\baru(x) = a
  + b \cdot x$. When applied to continuous functions in
  \S~\ref{sec:interp:errest}, this ensures that this interpolant is
  second-order accurate. There, we will also see that a nodal
  interpolant from the space $\{\tilv\}$ is third-order accurate.

  2. The assumption (Z4) is necessary not only to ensure that $\barv$
  interpolates $v$, but also to ensure that the operation $v \mapsto
  \barv$ is invertible. Indeed, the extended hat function in 1D
  \[
  \zz(x) = \cases{
    1/3 & -1\leq x \leq 1 \\
    1/3 (2+x) & -2 \leq x \leq 1 \\
    1/3 (2-x) & 1 \leq x \leq 2 \\
    0 & |x|>2
  }
  \]
  satisfies (Z1)--(Z3), yet for the function $u = (-1,0,1)_{\rm per}$
  (periodic repetition of $(-1,0,1)$) we have $\baru \equiv 0$.
\end{remark}

\begin{lemma}
  \label{th:interp:regul}
  Let $v \in \Us$, then $\barv \in \WW^{1,\infty}_\loc(\R^d; \R^m)$
  and $\tilv \in \WW^{3,\infty}_\loc(\R^d; \R^m)$.
\end{lemma}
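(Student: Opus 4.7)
My plan is to reduce each claim to a regularity statement about a single fixed basis function, and then exploit (Z1)--(Z2) together with standard properties of convolution. The starting observation is locality: by (Z2) we have $\bar\omega_\xi = \xi + \bar\omega_0$ and $\tilde\omega_\xi \subset \xi + (\bar\omega_0 + \bar\omega_0)$, both lattice translates of fixed compact sets, so for any bounded open $\Omega \subset \R^d$ only finitely many indices $\xi \in \Z^d$ activate $\zz(\cdot - \xi)$ or $\zzz(\cdot - \xi)$ on $\Omega$. Invoking \eqref{eq:interp:S1_interp} and \eqref{eq:interp:defn_tildev_2}, the defining series for $\barv$ and $\tilv$ therefore collapse on $\Omega$ to finite linear combinations of translates of $\zz$ and $\zzz$ respectively.

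From this reduction the first half of the lemma is immediate: each summand $\zz(\cdot-\xi)$ lies in $W^{1,\infty}(\R^d)$ by (Z1), hence any finite linear combination does too, so $\barv \in W^{1,\infty}_\loc(\R^d;\R^m)$. The substance of the lemma is therefore the claim that $\zzz \in W^{3,\infty}(\R^d)$, and then $\tilv \in W^{3,\infty}_\loc(\R^d;\R^m)$ follows by the same locality argument.

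To establish $\zzz \in W^{3,\infty}(\R^d)$ I would compute the weak derivatives of $\zzz = \zz \ast \zz$ by distributing derivatives across the convolution,
\[
  \nabla^k \zzz \,=\, (\nabla^i \zz) \ast (\nabla^j \zz), \qquad i+j=k.
\]
For $k \leq 2$ one may take $i,j \leq 1$, and the resulting convolution of two compactly supported $L^\infty$ functions is itself in $L^\infty(\R^d)$, with bound controlled by $|\bar\omega_0|$ and $\|\zz\|_{W^{1,\infty}}$.

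The main obstacle is $k=3$: since $\zz$ is only $W^{1,\infty}$, one of the factors must carry a second-order derivative, which is a priori only a compactly supported distribution of order one. The plan is to test $\nabla^3\zzz$ against $\phi \in C^\infty_c(\R^d)$, transfer one of the three derivatives off the convolution via integration by parts so that it acts on $\phi$, and pair the remaining second-order factor $\nabla^2 \zz$ against the $L^\infty$ function $\nabla\zz$ using duality; compactness of the supports is what converts the $L^\infty$ bound on $\nabla\zz$ into an $L^1$ estimate and yields a uniform $L^\infty$ bound on $\nabla^3\zzz$.
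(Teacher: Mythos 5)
Your reduction to a single basis function, and your treatment of $\barv$ and of $\D^k\zzz$ for $k\le 2$, are correct and coincide with the paper's argument. The gap is in the step $k=3$, which is the only nontrivial content of the lemma, and the plan you describe does not close. If you test $\D^3\zzz$ against $\phi\in C^\infty_c(\R^d)$ and integrate one derivative onto $\phi$, you obtain
\[
\big|\langle \D^3\zzz,\phi\rangle\big| \;\le\; \|\D\zz\ast\D\zz\|_{L^\infty}\,\|\D\phi\|_{L^1},
\]
a bound in terms of $\|\D\phi\|_{L^1}$ rather than $\|\phi\|_{L^1}$; this only re-establishes that $\D^3\zzz$ is a distribution of order one, which was known a priori, and does not place it in $L^\infty=(L^1)^*$. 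Likewise, there is no duality pairing of the first-order distribution $\D^2\zz$ against the merely bounded function $\D\zz$: such a pairing would require $\D\zz$ to be $C^1$ (or at least $W^{1,1}$) near the relevant set, which is exactly the regularity at stake. Compact support converts $L^\infty$ bounds into $L^1$ bounds for norms, but it does not upgrade the $L^1$ modulus of continuity of $\D\zz$ from $o(1)$ to $O(|h|)$.

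What actually closes the argument --- and is what the paper does --- is the identity $\D^2\zzz(x) = \int \D\zz(x-x')\otimes\D\zz(x')\dx'$ together with the Lipschitz estimate
\[
\big|\D^2\zzz(x+h)-\D^2\zzz(x)\big| \;\le\; \|\D\zz\|_{L^\infty}\int_{\R^d}\big|\D\zz(y+h)-\D\zz(y)\big|\dy .
\]
The crux is therefore the translation estimate $\|\D\zz(\cdot+h)-\D\zz\|_{L^1}\le C|h|$, i.e.\ that $\D\zz$ has bounded variation; this holds for the piecewise-polynomial examples of Section~\ref{sec:examples}, where $\D\zz$ is piecewise polynomial on a fixed partition, but it is not a formal consequence of $\zz\in\WW^{1,\infty}$ with compact support (the paper itself asserts the Lipschitz continuity of $\D\zz\ast\D\zz$ without comment). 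Your proof needs to supply this one-derivative-in-$L^1$ gain explicitly; without it, neither the duality scheme nor any redistribution of the three derivatives over the two factors of $\zz\ast\zz$ produces the third bounded derivative.
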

\begin{proof}
  The basis function $\zz$ has compact support and belongs to
  $\WW^{1,\infty}$. Hence, in every compact subset of $\R^d$, $\barv$
  is a finite linear combination of Lipschitz functions, which implies
  that $\barv \in \WW^{1,\infty}_\loc$.
 
  A straightforward computation shows that
  \begin{align*}
    \D^2 \zzz(x) = \int \D\zz(x - x') \otimes \D\zz(x') \dx,
  \end{align*}
  hence $\D^2\zzz$ is Lipschitz continuous and in particular $\zzz \in
  \WW^{3,\infty}$ with compact support. The same argument as above
  applies to prove that $\tilv \in \WW^{3,\infty}_\loc$.
\end{proof}

\subsection{Examples} 
\label{sec:examples}
\begin{enumerate}
\item {\it Q1-interpolation: } possibly the most natural choice for
  the basis function $\zz$ is the tensor product
  \begin{equation} \label{eq:q1_interpolation}
    \zz(x) := \prod_{i = 1}^d \max\b(0, 1 - |x_i|\b).
  \end{equation}
  In this case, ${\rm supp}(\zz) = [-1,1]^d$ is compact and $\zz$ is
  piecewise multi-linear; $\{ \barv \sep v \in \Us \}$ is the space of
  tensor product linear B-splines on the grid~$\QQ$, and $\{\tilv \sep
  v \in \Us\}$ is the space of cubic tensor product B-splines on the
  grid~$\QQ$ (see \cite{Hollig2003}).

\item {\it P1-interpolant: } In some cases, such as in
  \cite{OrVK:bqce_1}, it is convenient if $\D \bar u$ is piecewise
  constant. This is also possible within our framework. Let $\TT$ be a
  regular simplicial partition of $\R^d$, which is translation
  invariant ($\xi+\TT = \TT$ for all $\xi \in \Z^d$) and symmetric
  about the origin ($-\TT = \TT$), and let $\zz$ denote the P1-nodal
  basis function with respect to this partition. Then it is easy to
  see that $\zz$ satisfies (Z1)--(Z4).

  For $d = 1, 2$ such sub-divisions are straightforward to
  construct. For $d = 3$, the subdivision of the cube shown in Figure
  \ref{fig:tet} yields a partition $\TT$ with the desired properties.
\end{enumerate}

\begin{figure}[t]
  \includegraphics[height=5cm]{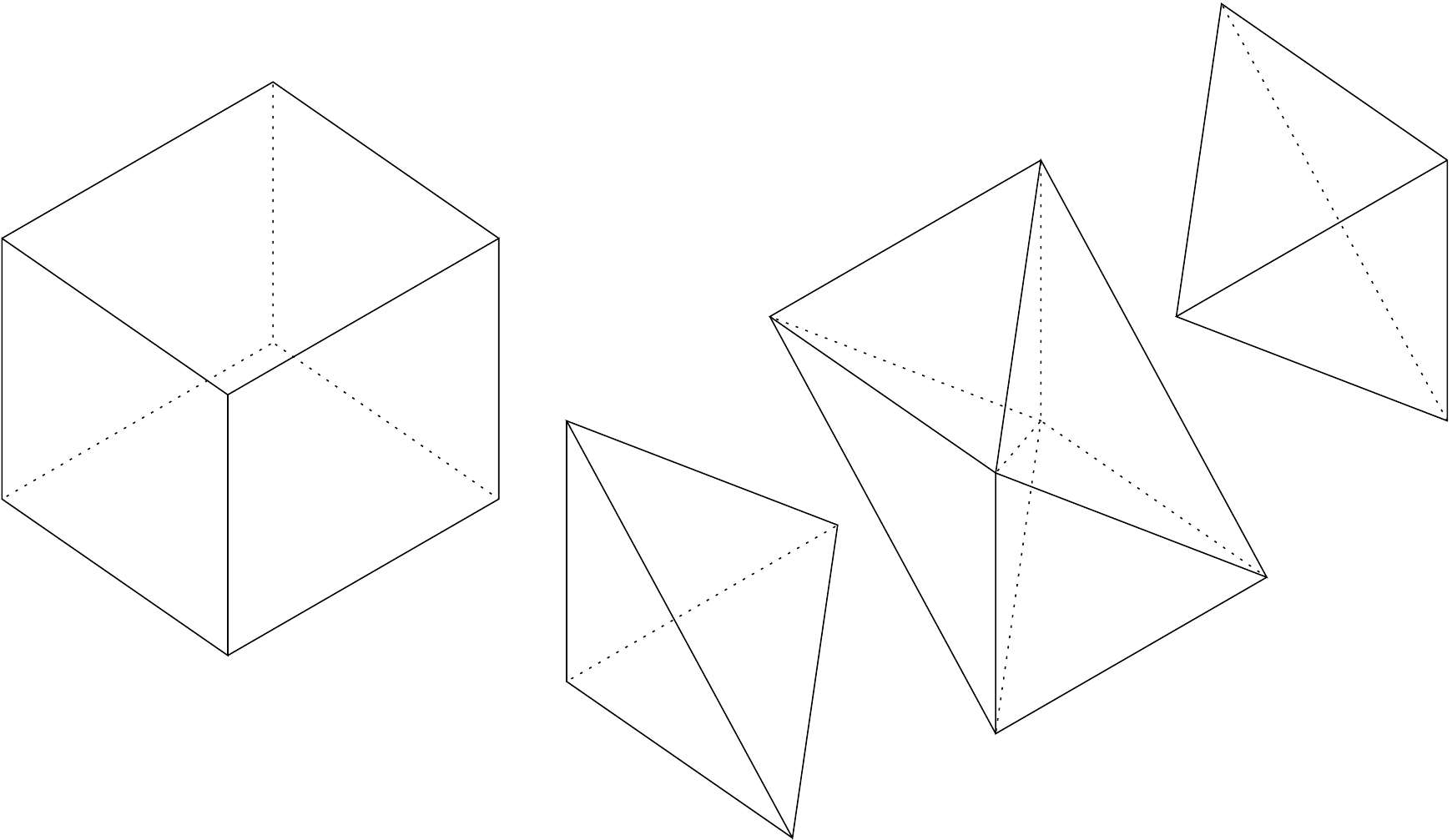}
  \caption{\label{fig:tet} Subdivision of the unit cube leading to a
    regular simplicial partition of $\R^d$ that is translation
    invariant and symmetric about the origin.}
\end{figure}

\subsection{Basic stability results}
\label{sec:interp_aux}
In the following theorem we collect all basic stability results for
the first two lattice interpolants. In these statements the norms may
take the value $+\infty$.

\begin{theorem}
  \label{th:norm_equiv_Lp}
  There exist constants $c_0, c_1, c_2 > 0$, independent of $p$, such
  that
  \begin{equation}
    \label{eq:interp:norm_equiv_Lp}
    c_0c_1 \| u \|_{\ell^p} \leq c_1 \|\tilu \|_{\ell^p} \leq \| \tilu \|_{\LL^p} 
    \leq \| \baru \|_{\LL^p} \leq c_2 \| u \|_{\ell^p}
    \qquad \forall u \in \Us, \quad p \in [1, \infty).
  \end{equation}
  Moreover, there exists a constant $c_1' > 0$, independent of $p$,
  such that
  \begin{equation}
    \label{eq:interp_stab}
    c_1' \| \D \baru \|_{\LL^p(\Om)} 
    \leq \| \D \tilu \|_{\LL^p(\Om)}
    \leq \| \D \baru \|_{\LL^p(\Om)}
    \qquad \forall u \in \Us, \quad p \in [1, \infty).
  \end{equation}
  All inequalities in \eqref{eq:interp:norm_equiv_Lp} and
  \eqref{eq:interp_stab}, except the first in each display, also hold
  for $p = \infty$.
\end{theorem}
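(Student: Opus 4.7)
The plan is to prove the chain \eqref{eq:interp:norm_equiv_Lp} in order of difficulty, and then derive \eqref{eq:interp_stab}. I group the inequalities into three blocks: (i) the upper bounds $\|\tilu\|_{\LL^p}\le\|\baru\|_{\LL^p}\le c_2\|u\|_{\ell^p}$ together with the upper bound in \eqref{eq:interp_stab}, all proved by H\"older/Young; (ii) the inverse inequality $c_1\|\tilu\|_{\ell^p}\le\|\tilu\|_{\LL^p}$, a local finite-dimensional norm-equivalence argument; and (iii) the lower bound $c_0\|u\|_{\ell^p}\le\|\tilu\|_{\ell^p}$ and the gradient lower bound in \eqref{eq:interp_stab}, both of which amount to inverting a convolution.

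For (i) I estimate $|\baru(x)|^p$ pointwise by H\"older on $\baru(x)=\sum_\xi u(\xi)\zz(x-\xi)$ with weights $|\zz(x-\xi)|$. Assumption (Z2) bounds the cardinality of the active support at each $x$, and (Z3) with $a=1,\,b=0$ yields the partition of unity $\sum_\xi\zz(x-\xi)=1$; together with $\|\zz\|_{\LL^\infty}<\infty$ this implies that $\sum_\xi|\zz(x-\xi)|\le K$ uniformly in $x$. Integrating in $x$ and swapping sum and integral gives $\|\baru\|_{\LL^p}^p\le K^{p-1}\|\zz\|_{\LL^1}\|u\|_{\ell^p}^p$, hence the rightmost inequality with $c_2$ independent of $p$. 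For $\|\tilu\|_{\LL^p}\le\|\baru\|_{\LL^p}$, Young's convolution inequality applied to $\tilu=\zz\ast\baru$ reduces the claim to verifying $\|\zz\|_{\LL^1}=1$, which follows from integrating the partition-of-unity identity over a single unit cell. Differentiating under the convolution yields $\D\tilu=\zz\ast\D\baru$, so the same Young estimate produces $\|\D\tilu\|_{\LL^p}\le\|\D\baru\|_{\LL^p}$ immediately.

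For (ii) I use that the compact support of $\zzz$ (Lemma \ref{th:interp:regul}) forces $\tilu$, restricted to any unit cell $Q_\xi:=\xi+[0,1)^d$ and a fixed enclosing patch $P_\xi$, to lie in a finite-dimensional subspace $V\subset C(P_\xi)$ spanned by finitely many translates of $\zzz$. On the finite-dimensional $V$ all norms are equivalent; the key observation is that the equivalence constant $C(p)$ between $\|\cdot\|_{\LL^p(P_\xi)}$ and the nodal seminorm $\|\cdot\|_{\ell^\infty(Q_\xi\cap\Z^d)}$ is continuous in $p\in[1,\infty]$ (with continuous extensions at the endpoints) and hence bounded uniformly. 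This gives $\max_{\eta\in Q_\xi\cap\Z^d}|\tilu(\eta)|\le C\|\tilu\|_{\LL^p(P_\xi)}$ with $C$ independent of $p$; raising to the $p$-th power, summing over $\xi\in\Z^d$, and absorbing the bounded-overlap factor (using $M^{1/p}\le\max(1,M)$) yields (ii).

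The main obstacle is (iii). Writing $\tilu(\xi)=\sum_\eta u(\eta)\zzz(\xi-\eta)$ as a discrete convolution with the finitely supported kernel $a:=\zzz|_{\Z^d}$, the goal is to invert $u\mapsto u\ast_d a$ on $\ell^p$ with a $p$-independent bound. The Fourier symbol, by Poisson summation, is $\sigma(k)=\sum_{j\in\Z^d}\hat\zz(k+2\pi j)^2$; this is continuous on the dual torus and satisfies $\sigma(0)\ge\hat\zz(0)^2=1$ since $\hat\zz(0)=\int\zz=1$ by (Z3). The crux is to show $\sigma(k)\ge c>0$ for all $k\in[-\pi,\pi)^d$, which I verify via a continuity-and-compactness argument supplemented by (Z4), in the same spirit as the counterexample in the Remark (where dropping (Z4) produces a $\zz$ for which the analogous symbol vanishes). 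A Wiener-type argument then yields an $\ell^1$ inverse kernel $a^{-1}$ with $a^{-1}\ast_d a=\delta_0$, and $\|u\|_{\ell^p}\le\|a^{-1}\|_{\ell^1}\|\tilu\|_{\ell^p}$ follows from Young's inequality for sequences, uniformly in $p$. The gradient lower bound $c_1'\|\D\baru\|_{\LL^p}\le\|\D\tilu\|_{\LL^p}$ follows from $\D\tilu=\zz\ast\D\baru$ by an analogous symbol-positivity argument for the continuous convolution with $\zz$, restricted to the range of the gradient operator. The most delicate point throughout is preserving $p$-uniformity in (iii): the natural Fourier analysis is $L^2$-native, and the transfer to arbitrary $p$ must be routed through the $\ell^1$ convolution inverse.
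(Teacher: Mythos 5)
Your blocks (i) and (ii) coincide with the paper's Lemma \ref{th:simple_stab}: H\"older plus bounded overlap of the supports for $\|\baru\|_{\LL^p}\le c_2\|u\|_{\ell^p}$, Young's inequality with $\zz\ge 0$, $\int\zz\dx=1$ for $\|\tilu\|_{\LL^p}\le\|\baru\|_{\LL^p}$ and its gradient analogue, and a local finite-dimensional seminorm equivalence for $c_1\|\tilu\|_{\ell^p}\le\|\tilu\|_{\LL^p}$. The architecture of your block (iii) --- view $u\mapsto(\tilu(\xi))_{\xi}$ as a discrete convolution, bound its symbol below, extract an $\ell^1$ inverse kernel, and get $p$-uniformity from Young's inequality for sequences --- is also the paper's (Lemma \ref{th:Conv_ellp}). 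But two steps that you compress into single sentences are precisely where the real work lies, and as stated they are gaps.

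First, the positivity of the symbol. You write $\sigma(k)=\sum_j\hat{\zz}(k+2\pi j)^2$ and assert that $\sigma\ge c>0$ follows ``via a continuity-and-compactness argument supplemented by (Z4).'' Continuity and compactness only reduce the uniform bound to pointwise positivity; they do not supply it. Worse, your $\sigma$ is a sum of \emph{squares} of the generally complex values $\hat{\zz}(k+2\pi j)$ (unless $\zz$ is even), so it is not manifestly nonnegative and the implication ``$\sigma(k_0)=0$ forces every term to vanish'' is unavailable. The paper's Lemma \ref{th:Conv_ell2} supplies the missing ingredient without ever touching the symbol, via the duality identity $\|u\|_{\ell^2}^2\le c_0^{-1}\|\baru\|_{L^2}^2=c_0^{-1}\sum_\xi u(\xi)\cdot\tilu(\xi)\le c_0^{-1}\|u\|_{\ell^2}\|\tilu\|_{\ell^2}$, where the first inequality is the local seminorm equivalence that uses (Z4); the resulting $\ell^2$ lower bound is then read off as $\min_\alpha\hat m(\alpha)\ge c_0$. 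You need either this identity or an actual argument that (Z4) forces the periodized symbol to be positive (e.g.\ that $\sigma(k_0)=0$ would make $\baru\equiv 0$ for $u(\eta)=e^{\imath k_0\cdot\eta}$, contradicting $\baru(\xi)=u(\xi)$); neither appears in your text.

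Second, and more seriously, the gradient lower bound $c_1'\|\D\baru\|_{\LL^p}\le\|\D\tilu\|_{\LL^p}$. You claim it follows from $\D\tilu=\zz\ast\D\baru$ ``by an analogous symbol-positivity argument for the continuous convolution with $\zz$, restricted to the range of the gradient operator.'' There is no analogous argument: by Riemann--Lebesgue the continuous symbol $\hat{\zz}(k)\to 0$ as $|k|\to\infty$, so convolution with $\zz$ has no lower bound on $L^p$, and restricting to gradients does not remove the high-frequency content of the merely piecewise-smooth field $\D\baru$. What rescues the estimate is that $\D\baru$ lies in the shift-invariant space generated by integer translates of $\D\zz$, and exploiting this is where the paper spends most of its effort (Lemma \ref{th:interp:norm_equiv}, step 2): set $v(\xi):=\tilu(\xi)$, split $\|\D\baru\|\le\|\D\baru-\D\barv\|+\|\D\barv\|$, bound $\|\D\barv\|\le C\|\D\tilu\|$ by a cell-wise seminorm equivalence, bound $\|\D\baru-\D\barv\|\le C\|\tilu-\tilv\|_{\ell^p}$ using the already-established $\ell^p$ invertibility of $\Conv$, and finally control $\|\tilu-\tilv\|_{\ell^p}$ by $\|\D\tilu\|_{\LL^p}$ through a Poincar\'e-type seminorm argument. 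Your proposal contains none of this, and the sentence offered in its place would not survive being written out.
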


\medskip
We establish this result in a series of lemmas.

\begin{lemma}
  \label{th:seminorm_lemma}
  Let $\mathscr{X}$ be a finite-dimensional vector space and let $S_1,
  S_2 : \mathscr{X} \to \R$ be semi-norms. If ${\rm ker}(S_2) \subset
  {\rm ker}(S_1)$ then there exists a constant $C > 0$ such that
  $S_1(u) \leq C S_2(u)$ for all $u \in \mathscr{X}$.
\end{lemma}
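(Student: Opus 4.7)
The plan is to reduce to the standard fact that all norms on a finite-dimensional vector space are equivalent, by quotienting out $\ker(S_2)$. First I would form the quotient $\mathscr{Y} := \mathscr{X}/\ker(S_2)$, which is finite-dimensional. On $\mathscr{Y}$, the seminorm $S_2$ descends to a genuine norm (by definition of the kernel), and the hypothesis $\ker(S_2) \subset \ker(S_1)$ guarantees that $S_1$ also descends to a well-defined seminorm on $\mathscr{Y}$: if $u - u' \in \ker(S_2) \subset \ker(S_1)$ then $|S_1(u) - S_1(u')| \le S_1(u-u') = 0$, so the value of $S_1$ on an equivalence class does not depend on the chosen representative.

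Next I would use the classical compactness argument. Any seminorm on a finite-dimensional space is continuous with respect to the (unique up to equivalence) norm topology, since it is bounded by $\sum_i |c_i| S_j(e_i)$ when expressed in a basis $\{e_i\}$. In particular, the descended $S_1$ is continuous on $\mathscr{Y}$ equipped with the norm $S_2$. The unit sphere $\{[u] \in \mathscr{Y} : S_2([u]) = 1\}$ is closed and bounded in this finite-dimensional normed space, hence compact, so $S_1$ attains a finite maximum $C$ on it.

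Finally I would conclude as follows. For $u \in \mathscr{X}$ with $S_2(u) > 0$, homogeneity of the descended seminorms gives
\begin{equation*}
  S_1(u) = S_2(u) \, S_1\!\left( [u] / S_2([u]) \right) \le C \, S_2(u).
\end{equation*}
For $u$ with $S_2(u) = 0$, the hypothesis $\ker(S_2) \subset \ker(S_1)$ directly gives $S_1(u) = 0 \le C \cdot S_2(u)$, completing the proof.

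There is no real obstacle here; the only point that deserves care is checking that $S_1$ descends consistently to the quotient (which uses the kernel hypothesis) and that the finite-dimensionality is genuinely being exploited at the compactness step, since on an infinite-dimensional space the unit sphere of a seminorm need not be compact and the conclusion can fail.
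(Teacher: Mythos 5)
The paper states this lemma without proof (it is quoted as a standard fact), so there is no in-paper argument to compare against. Your proof is correct and is the standard one: quotienting by $\ker(S_2)$, checking via the kernel inclusion that $S_1$ descends, and then using continuity of seminorms together with compactness of the $S_2$-unit sphere in the finite-dimensional quotient; the degenerate case $S_2 \equiv 0$ is covered by your final remark, since then $\mathscr{X} = \ker(S_2) \subset \ker(S_1)$ forces $S_1 \equiv 0$.
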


\begin{lemma}
  \label{th:simple_stab}
  There exist constants $c_1, c_2 > 0$ such that
  \begin{displaymath}
    c_1 \| \tilu \|_{\ell^p} \leq \| \tilu \|_{\LL^p} 
    \leq \| \baru \|_{\LL^p} \leq c_2 \| u \|_{\ell^p}    
  \end{displaymath}
\end{lemma}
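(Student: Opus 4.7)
The plan is to establish the three inequalities in Lemma \ref{th:simple_stab} separately, using distinct tools for each; the rightmost and leftmost require genuine work while the middle one is a direct application of Young's convolution inequality.

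For the rightmost inequality $\|\baru\|_{\LL^p} \leq c_2 \|u\|_{\ell^p}$, I would partition $\R^d$ into the unit cubes $Q_\eta := \eta + [0,1]^d$, $\eta \in \Z^d$. By the locality assumption (Z2), there exists a finite stencil $S \subset \Z^d$ such that on $Q_\eta$ only the translates $\zz(\cdot - \xi)$ with $\xi - \eta \in S$ contribute to $\baru$. Combining the discrete H\"older inequality applied to this finite sum with the uniform bound $\|\zz\|_{\LL^\infty} < \infty$ (which follows from (Z1) and (Z2)) gives $\int_{Q_\eta} |\baru|^p \dx \leq C \sum_{\xi - \eta \in S} |u(\xi)|^p$. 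Summing over $\eta$ and using that each $\xi$ lies in at most $|S|$ stencils yields the desired bound with a $p$-independent constant.

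For the middle inequality $\|\tilu\|_{\LL^p} \leq \|\baru\|_{\LL^p}$, I would apply Young's convolution inequality to $\tilu = \zz \ast \baru$, obtaining $\|\tilu\|_{\LL^p} \leq \|\zz\|_{\LL^1} \|\baru\|_{\LL^p}$. The order property (Z3) with $a = 1, b = 0$ yields the partition of unity $\sum_\xi \zz(\cdot - \xi) \equiv 1$; integrating this identity over $[0,1]^d$ and re-indexing the sum by translations gives $\int_{\R^d} \zz \dx = 1$. Under the nonnegativity of $\zz$ enjoyed by the canonical examples of Section \ref{sec:examples}, this gives $\|\zz\|_{\LL^1} = 1$ and the middle inequality holds with constant one.

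The leftmost inequality $c_1 \|\tilu\|_{\ell^p} \leq \|\tilu\|_{\LL^p}$ is the main obstacle, and I would exploit the finite-dimensional seminorm equivalence of Lemma \ref{th:seminorm_lemma}. Let $Q := (-\tfrac12, \tfrac12]^d$ and $E := \{\eta \in \Z^d : \zzz(\cdot - \eta) \not\equiv 0 \text{ on } Q\}$, which is finite by (Z2). Define the finite-dimensional space $V$ of restrictions to $Q$ of linear combinations $\sum_{\eta \in E} a_\eta \zzz(\cdot - \eta)$, and equip it with the seminorms $S_1(w) := |w(0)|$ and $S_2(w) := \|w\|_{\LL^p(Q)}$. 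Since elements of $V$ are continuous, $\ker S_2 \subset \ker S_1$, so Lemma \ref{th:seminorm_lemma} delivers a constant $C$ with $|w(0)|^p \leq C \|w\|_{\LL^p(Q)}^p$ for all $w \in V$. Translation invariance of the lattice and of $\zzz$ ensures $\tilu(\cdot + \xi)|_Q \in V$ for every $\xi \in \Z^d$ with the \emph{same} constant $C$; the corresponding bound reads $|\tilu(\xi)|^p \leq C \|\tilu\|_{\LL^p(\xi + Q)}^p$. Summing over $\xi$, whose translated cubes $\xi + Q$ tile $\R^d$ disjointly, yields $\sum_\xi |\tilu(\xi)|^p \leq C \|\tilu\|_{\LL^p(\R^d)}^p$. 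Note that $p$-independence of $C$ is not claimed in this lemma and is precisely the refinement proved in Theorem \ref{th:norm_equiv_Lp}.
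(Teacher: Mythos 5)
Your proof is correct and follows essentially the same route as the paper's: Young's convolution inequality with $\int_{\R^d}\zz\dx = 1$ for the middle bound, a H\"older-plus-locality computation for the upper bound, and the seminorm-equivalence Lemma \ref{th:seminorm_lemma} applied on translated unit cubes for the lower bound. The only points worth flagging are shared with or already anticipated by the paper: getting the middle inequality with constant exactly $1$ requires $\zz \ge 0$ (used but not listed among (Z1)--(Z4)), and the paper secures the $p$-independence of $c_1$ needed later in Theorem \ref{th:norm_equiv_Lp} already at this stage by routing the pointwise bound through the $L^1$-norm of the unit cube before applying H\"older.
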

\begin{proof}
  {\it 1. Proof of $\|\tilu\|_{\LL^p} \le \|\baru \|_{\LL^p}$: } This
  statement follows simply from the fact that $\tilu = \zz \ast \bar
  u$, where $\zz \geq 0$ and $\int\zz \dx = 1$ (see, e.g.,
  \cite[Sec. C.4]{EvansPDE}).

  {\it 2. Proof of $\| \baru \|_{\LL^p} \leq c_2 \| u \|_{\ell^p}$. }
  Fix $\xi \in \Z^d$ and let $\omega := \xi + [-1/2, 1/2]^d$. Let
  $S_1(u) := |u(\xi)|$ and $S_2(u) := \| \baru \|_{L^p(\omega)}$, then
  Lemma \ref{th:seminorm_lemma} implies that
  \begin{displaymath}
    \| \baru \|_{\LL^p} \leq c_2 \| u \|_{\ell^p}.
  \end{displaymath}

  An application of H\"{o}lder's inequality yields
  \begin{align*}
    \| \baru \|_{\LL^p}^p =~& \int_\Om \B| \sum_{\xi \in \L} u(\xi)
    \zz(x - \xi) \B|^p \dx \\
    \leq~& \int_{\R^d} \sum_{\xi \in \Z^d}
    |u(\xi)|^p |\zz(x-\xi)| \, \B(\sum_{\xi \in \Z^d} |\zz(x-\xi)|\B)^{p/p'} \dx.
  \end{align*}
  Since $\zz$ has compact support, we can bound
  \begin{displaymath}
    \B(\sum_{\xi \in \Z^d} |\zz(x-\xi)|\B)^{p/p'} \leq C,
  \end{displaymath}
  undidendently of $x$ and $p$. Hence, we obtain
  \begin{displaymath}
    \| \baru \|_{\LL^p}^p \leq C \int_{\R^d} \sum_{\xi \in \Z^d}
    |u(\xi)|^p |\zz(x-\xi)| \dx =C  \| u \|_{\ell^p} \| \zz \|_{L^1}.
  \end{displaymath}

  {\it 3. Proof of $c_1 \| \tilu \|_{\ell^p} \leq \|\tilu \|_{\LL^p}$: }
  If $\tilu = 0$ in $\xi + [-1/2,1/2]^d$ then $\tilu(\xi) = 0$, hence
  Lemma \ref{th:seminorm_lemma} implies that
  \begin{displaymath}
    |\tilu(\xi)| \leq C \| \tilu \|_{\LL^1(\xi+[-1/2,1/2]^d)} \leq C
    \| \tilu \|_{\LL^p(\xi+[-1/2,1/2]^d)},
  \end{displaymath}
  where $C$ is independent of $p$.  Summing over $\xi \in \L$ (and
  with suitable modification for $p = \infty$) yields
  \begin{displaymath}
    \| \tilu \|_{\ell^p} \leq C \|\tilu \|_{\LL^p}, \qquad \text{for }
    u \in \Us. \qedhere
  \end{displaymath}
\end{proof}

It remains to prove that $c_0 \| u \|_{\ell^p} \leq \| \tilu
\|_{\ell^p}$. We will first prove this for $p = 2$ and then use
Fourier analysis to extend the result to general $p$. It is convenient
to interpret this analysis in terms of the operator $\Conv :
\ell^p(\Z^d; \R^m) \to \ell^p(\Z^d; \R^m)$, $(\Conv u)(\xi) :=
\tilu(\xi)$.

\begin{lemma}
  \label{th:Conv_ell2}
  $\Conv : \ell^2 \to \ell^2$ is an isomorphism; that is, there exists
  $c_0 > 0$ such that
  \begin{displaymath}
    c_0 \| u \|_{\ell^2} \leq \| \tilu \|_{\ell^2} \qquad \forall u \in \Us.
  \end{displaymath}
\end{lemma}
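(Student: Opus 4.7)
The plan is to reformulate the inequality as a Fourier multiplier bound on $\ell^2(\Z^d)$ and to analyse the symbol using the Poisson summation formula together with (Z4). Writing $(\Conv u)(\xi) = \sum_{\eta \in \Z^d} u(\eta)\,\zzz(\xi - \eta)$, we see that $\Conv$ is a discrete convolution with the sequence $\{\zzz(\xi)\}_{\xi \in \Z^d}$. Since $\zzz$ has compact support by (Z2), only finitely many of these lattice values are nonzero, so the Fourier symbol
$$m(\theta) := \sum_{\xi \in \Z^d} \zzz(\xi)\, e^{-i\xi\cdot\theta}, \qquad \theta \in \mathbb{T}^d := [-\pi,\pi]^d,$$
is a trigonometric polynomial in $\theta$ and hence continuous on the compact torus. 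By Parseval's identity for Fourier series on $\Z^d$, the desired inequality $c_0\|u\|_{\ell^2} \leq \|\tilu\|_{\ell^2}$ is equivalent to the pointwise bound $|m(\theta)| \geq c_0$ on $\mathbb{T}^d$, and by continuity and compactness this reduces to showing $m(\theta) \neq 0$ pointwise.

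To obtain a workable expression for $m$, I would apply Poisson summation twice. Lemma~\ref{th:interp:regul} gives $\zzz \in \WW^{3,\infty}$ with compact support, so the standard hypotheses are met, and
$$m(\theta) \;=\; \sum_{k \in \Z^d} \widehat{\zzz}(\theta + 2\pi k) \;=\; \sum_{k \in \Z^d} \widehat{\zz}(\theta + 2\pi k)^2,$$
using $\widehat{\zz \ast \zz} = \widehat{\zz}^2$. A second application of Poisson summation, now to $\zz$ itself, combined with (Z4) on the left-hand side yields
$$1 \;=\; \zz(0) \;=\; \sum_{\xi \in \Z^d} \zz(\xi)\, e^{-i\xi\cdot\theta} \;=\; \sum_{k \in \Z^d} \widehat{\zz}(\theta + 2\pi k) \qquad \forall\,\theta \in \mathbb{T}^d,$$
which is the interpolation property expressed at the Fourier level.

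The main obstacle is to combine these two identities into a non-vanishing statement for $m$. When $\zz$ is symmetric about the origin, as in both examples of \S\ref{sec:examples}, $\widehat{\zz}$ is real-valued, so $m(\theta) = \sum_k \widehat{\zz}(\theta + 2\pi k)^2$ is a sum of non-negative reals; at least one summand must be nonzero in order for $\sum_k \widehat{\zz}(\theta + 2\pi k) = 1$ to hold, hence $m(\theta) > 0$, and continuity on the compact torus then upgrades this to the uniform lower bound $c_0 := \min_{\mathbb{T}^d} m > 0$. In the absence of symmetry, $m(\theta)$ can be complex-valued and pointwise non-vanishing is more delicate; a natural workaround is to replace $\zzz := \zz \ast \zz$ by $\zz \ast \zz(-\bullet)$, so that the symbol becomes the manifestly non-negative quantity $|\widehat{\zz}|^2$, with no loss to the $\ell^2$ stability content of the lemma.
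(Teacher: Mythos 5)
Your proposal takes a genuinely different route from the paper, and it also has two concrete gaps. For comparison: the paper's proof is a three-line duality argument with no Fourier analysis. It combines the bound $c_0\|u\|_{\ell^2}\le \|\baru\|_{L^2}$ (already obtained in step~3 of Lemma~\ref{th:simple_stab} by a local seminorm argument, which is where (Z4) enters) with the Gram identity $\|\baru\|_{L^2}^2=\sum_{\xi}u(\xi)\cdot\tilu(\xi)$ and Cauchy--Schwarz, giving $\|u\|_{\ell^2}^2\lesssim \|u\|_{\ell^2}\|\tilu\|_{\ell^2}$. In other words, $\Conv$ is recognised as the Gram operator of the system $\{\zz(\cdot-\xi)\}$, so its $\ell^2$-coercivity is exactly the already-established Riesz-basis lower bound for $\baru$. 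The Fourier symbol appears in the paper only in the subsequent Lemma~\ref{th:Conv_ellp}, and there its positivity is \emph{deduced from} the $\ell^2$ result; your proposal inverts this logical order and tries to prove positivity of the symbol directly.

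The gaps. First, the pointwise identity $\sum_{k}\hat{\zz}(\theta+2\pi k)=1$ is not justified under (Z1)--(Z4): $\zz\in W^{1,\infty}$ with compact support only yields $\hat{\zz}(k)=O(|k|^{-1})$, so the periodised series need not converge absolutely in any dimension, and the ``standard hypotheses'' for pointwise Poisson summation are not met for $\zz$ itself. (The periodisation of $\hat{\zzz}=\hat{\zz}^{\,2}$ is unproblematic, since $\hat{\zz}\in L^2$ implies $\hat{\zzz}\in L^1$, so that half of the argument is fine.) Second, and more seriously, your argument only reaches $m(\theta)\neq 0$ when $\hat{\zz}$ is real, i.e.\ when $\zz$ is even, and your proposed repair --- replacing $\zzz=\zz\ast\zz$ by the autocorrelation $\zz\ast\zz(-\cdot)$ --- changes the operator $\Conv$ and therefore proves a different statement: the lemma concerns the specific quasi-interpolant $\tilu=\zz\ast\baru$, whose nodal values are the discrete convolution of $u$ with $(\zz\ast\zz)(\xi)$, not with the autocorrelation. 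To be fair, the paper's own proof also silently uses $\int\baru(x)\zz(x-\xi)\dx=\tilu(\xi)$, which likewise requires $\zz$ to be even (as it is in both examples of \S\ref{sec:examples}); but if you follow the Fourier route you must make that hypothesis explicit, whereas the paper's Gram-operator argument localises all the essential input from (Z4) into the elementary bound $c_0\|u\|_{\ell^2}\le\|\baru\|_{L^2}$ and avoids the Poisson summation issue entirely.
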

\begin{proof}
  Let $u \in \ell^2$. Using the same argument as in step 3 of the
  proof of Lemma \ref{th:simple_stab} we obtain
  \begin{displaymath}
    \| u \|_{\ell^2} \leq c_0^{-1} \| \baru \|_{L^2};
  \end{displaymath}
  hence, we can estimate
  \begin{align*}
    \| u \|_{\ell^2}^2 \leq~& c_0^{-1} \| \baru \|_{L^2}^2 
    = c_0^{-1} \sum_{\xi\in\Z^d} u(\xi) \cdot \int \baru(x) \zz(x-\xi) \dx \\
    =\,& c_0^{-1} \sum_{\xi\in\Z^d} u(\xi) \cdot \tilu(\xi) 
    \leq c_0^{-1} \|u\|_{\ell^2} \|\tilu\|_{\ell^2}. \qedhere
  \end{align*}
\end{proof}

\begin{lemma}
  \label{th:Conv_ellp}
  For any $p \in [1, \infty)$, $\Conv : \ell^p \to \ell^p$ is an
  isomorphism; in particular, there exists $c_0 > 0$ such that
  \begin{displaymath}
     c_0 \| u \|_{\ell^p} \leq \| \tilu \|_{\ell^p} \qquad \forall u \in \Us.
  \end{displaymath}
\end{lemma}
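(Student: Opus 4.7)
The plan is to view $\Conv$ as a discrete convolution operator on $\Z^d$. Setting $k(\xi) := \zzz(\xi)$ for $\xi \in \Z^d$, one has
\begin{displaymath}
  (\Conv u)(\xi) \,=\, \sum_{\eta \in \Z^d} \zzz(\xi-\eta)\, u(\eta) \,=\, (k \ast u)(\xi),
\end{displaymath}
and by assumption (Z2), $\zzz$ has compact support, so $k$ is finitely supported and in particular $k \in \ell^1(\Z^d)$. The boundedness of $\Conv$ on every $\ell^p$ is then immediate from Young's convolution inequality, $\|\Conv u\|_{\ell^p} \le \|k\|_{\ell^1} \|u\|_{\ell^p}$, which takes care of the easy direction in \eqref{eq:interp:norm_equiv_Lp} for general $p$.

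The substantive content is the lower bound $c_0 \|u\|_{\ell^p} \le \|\tilu\|_{\ell^p}$, i.e., the invertibility of $\Conv$ with a bounded inverse. My next step is to introduce the symbol
\begin{displaymath}
  \hat{k}(\theta) \,:=\, \sum_{\xi \in \Z^d} k(\xi)\, e^{-i \xi \cdot \theta}, \qquad \theta \in \mathbb{T}^d := (\R / 2\pi\Z)^d,
\end{displaymath}
which is a trigonometric polynomial because $k$ is finitely supported. Applying Parseval's identity on $\Z^d$, the $\ell^2$-isomorphism already established in Lemma \ref{th:Conv_ell2} is equivalent to a uniform lower bound $|\hat{k}(\theta)| \ge c_0 > 0$ on $\mathbb{T}^d$.

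The decisive step is then to invoke Wiener's lemma in the Wiener algebra $A(\mathbb{T}^d) = \{f \colon \hat f \in \ell^1(\Z^d)\}$: since $\hat{k} \in A(\mathbb{T}^d)$ (trivially, being a trigonometric polynomial) and nowhere vanishes, the reciprocal $1/\hat{k}$ also lies in $A(\mathbb{T}^d)$. Equivalently, there exists $k^{-1} \in \ell^1(\Z^d)$ with $k \ast k^{-1} = \delta_0$, so $\Conv^{-1}$ is given by convolution with $k^{-1}$. A second application of Young's inequality then yields
\begin{displaymath}
  \|u\|_{\ell^p} \,=\, \|k^{-1} \ast \Conv u\|_{\ell^p} \,\le\, \|k^{-1}\|_{\ell^1}\, \|\tilu\|_{\ell^p}
\end{displaymath}
for every $u \in \Us$ and $p \in [1,\infty)$, which is exactly the required bound.

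The principal obstacle is the appeal to Wiener's lemma, which is the cleanest mechanism for transferring $\ell^2$-invertibility to $\ell^p$-invertibility of a convolution operator. A naive Neumann-series expansion of $1/\hat{k}$ would demand additional smallness of $\hat{k} - \hat{k}(0)$ that is not available under assumptions (Z1)--(Z4) alone, whereas Wiener's lemma uses only the zero-free property delivered by Lemma \ref{th:Conv_ell2}. No further case distinction is needed for the vector-valued setting, since $k$ is scalar and $\Conv$ acts componentwise on $\R^m$-valued lattice functions.
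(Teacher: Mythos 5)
Your proof is correct and follows essentially the same route as the paper: both write $\Conv$ as discrete convolution with the kernel $\zzz|_{\Z^d}$, pass to the Fourier side, use Lemma \ref{th:Conv_ell2} to obtain a uniform lower bound on the symbol, and conclude via Young's inequality once the inverse kernel is known to lie in $\ell^1(\Z^d)$. The single point of divergence is how that last fact is established: you invoke Wiener's $1/f$ lemma on $\mathbb{T}^d$, whereas the paper proves it by hand, noting that the symbol $\hat m$ is analytic and bounded below, so $1/\hat m$ is analytic, and then deducing $g = \Fourier^{-1}(1/\hat m) \in \ell^1$ from the weighted Cauchy--Schwarz bound $\|g\|_{\ell^1} \le \|(1+|\xi|^2)^{-k}\|_{\ell^2}\,\|(\id - \Delta)^k (1/\hat m)\|_{L^2}$ with $k > d/2$ (and uses Riesz--Thorin where you use Young). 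Your appeal to Wiener's lemma is legitimate and slightly more general, since it needs only $\zzz|_{\Z^d} \in \ell^1$ and the nonvanishing of the symbol; the paper's argument is self-contained and, exploiting analyticity, yields stronger (indeed exponential) decay of the inverse kernel. Both treatments leave the same minor point implicit, namely the extension from $u \in \ell^p$ to arbitrary $u \in \Us$ with possibly infinite norm, which is handled by the contrapositive together with associativity of convolution for finitely supported $k$ and $k^{-1} \in \ell^1$.
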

\begin{proof}
  We use the representation
  \[
  \Conv u (\xi) = \sum_{\eta\in\Z^d} m(\xi-\eta) u(\eta),
  \]
  where $m(\xi-\eta) := \int \zz(\xi-x) \zz(\eta-x) \dx$.
  
  Next, we introduce a discrete Fourier transform $\Fourier : \ell^2
  \to L^2([0,1]^d)$,
  \[
  (\Fourier u)(\alpha) := \sum_{\xi \in \Z^d} u(\xi) e^{-2\pi\ii \alpha\cdot\xi}.
  \]
  With this definition $\Conv u = \Fourier^{-1} (\hat{m} \Fourier u)$,
  where $\hat{m}=\Fourier m \in L^2([0,1]^d)$, is the Fourier
  multiplier associated with $\Conv$.  Since $\Fourier$ is an
  isomorphism between $\ell^2$ and $L^2([0,1]^d)$, we can express
  \[
  \|\Conv^{-1}\|_{\ell^2} = (\min_\alpha \hat{m}(\alpha))^{-1},
  \]
  and from Lemma \ref{th:Conv_ell2} we can deduce that $\min_\alpha
  \hat{m}(\alpha) \geq c_0 > 0$.  Since $\hat{m}$ is analytic and
  bounded below it follows that $\hat{g}(\alpha) :=
  (\hat{m}(\alpha))^{-1}$ is an analytic function on $[0,1]^d$.

  Next, we show that $g \in \ell^1$, where $g := \Fourier^{-1}
  \hat{g}$. Let $k > d/2$, then
  \begin{align*}
    \|g\|_{\ell^1}
    \leq~&
    \b\|(1+|\xi|^2)^{-k}\b\|_{\ell^2} \, \b\|(1+|\xi|^2)^k g(\xi)\b\|_{\ell^2}
    \\ \leq~&
    \|(1+|\xi|^2)^{-k}\|_{\ell^2} \, \|(\id - \Delta)^k \hat{g}\|_{L^2([0,1]^d)}
    .
  \end{align*}
  The first term is bounded since $k > d/2$, while the second term is
  bounded due to analyticity of $\hat{g}$.

  \def\Gs{\mathscr{G}}
  Since $g \in \ell^1$ it is easy to see that the operator $\Gs : \ell^p \to \ell^p$,
  \begin{displaymath}
    (\Gs u)(\xi)  := \sum_{\eta \in \Z^d} g(\xi-\eta) u(\eta),
  \end{displaymath}
  is well-defined, for all $p \in [1, \infty]$: For $p \in
  \{1,\infty\}$ it is well-known that $\|\Gs\|_{\ell^p} =
  \|g\|_{\ell^1}$ (where $\|\Gs\|_{\ell^p}$ is the
  $\ell^p$-operator norm), and hence, by the Riesz--Thorin
  interpolation theorem, $\|\Gs\|_{\ell^p} \leq \|g\|_{\ell^1}$
  for all $p \in [1,\infty]$.

  It only remains to show that $\Gs = \Conv^{-1}$. This follows from
  the fact that, by construction, $\Gs = \Conv^{-1}$ on $\ell^2$ and
  density of functions with compact support in $\ell^p$ for $p <
  \infty$.
\end{proof}

Lemmas \ref{th:simple_stab}--\ref{th:Conv_ellp} establishes
\eqref{eq:interp:norm_equiv_Lp}. We will now prove the corresponding
result for the gradients.

\def\cDulo{c_1'}
\begin{lemma}
  \label{th:interp:norm_equiv}
  There exists a constant $\cDulo > 0$, independent of $p$, such that
  \begin{equation}
    \label{eq:interp_stab}
    \cDulo \| \D \baru \|_{\LL^p} 
    \leq \| \D \tilu \|_{\LL^p}
    \leq \| \D \baru \|_{\LL^p}
    \qquad \forall u \in \Us, \quad p \in [1, \infty).
  \end{equation}
  The second bound also holds for $p = \infty$.
\end{lemma}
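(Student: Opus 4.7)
The upper bound $\|\D\tilu\|_{\LL^p} \le \|\D\baru\|_{\LL^p}$ holds for all $p \in [1,\infty]$ by a one-line argument: differentiating $\tilu = \zz \ast \baru$ gives $\D\tilu = \zz \ast \D\baru$, and since $\zz \ge 0$ with $\int\zz\,\dx = 1$ (a consequence of (Z3)), Young's convolution inequality yields the bound exactly as in step~1 of Lemma~\ref{th:simple_stab}.

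For the lower bound my plan is to route through the lattice finite differences $D_j u(\xi) := u(\xi + e_j) - u(\xi)$ and reuse the already-proved Lemmas~\ref{th:seminorm_lemma}, \ref{th:simple_stab}, and~\ref{th:Conv_ellp}. The first ingredient is the discrete estimate $\|\D\baru\|_{\LL^p} \le C\|Du\|_{\ell^p}$, proved by an argument parallel to step~2 of Lemma~\ref{th:simple_stab}: on each cell $\omega_{\xi_0} := \xi_0 + [-1/2, 1/2]^d$ the partition-of-unity identity $\sum_\xi \D\zz(x-\xi) = 0$ lets one rewrite $\D\baru(x) = \sum_\xi (u(\xi) - u(\xi_0))\D\zz(x-\xi)$, and telescoping $u(\xi) - u(\xi_0)$ along a bounded-length lattice path through edges yields the pointwise bound $|\D\baru(x)| \le C\sum_{\eta \in N(\xi_0),\, j} |D_j u(\eta)|$; H\"older's inequality and summation with bounded overlap give the $\LL^p$ estimate with a $p$-independent constant.

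It then suffices to bound $\|Du\|_{\ell^p}$ by $\|\D\tilu\|_{\LL^p}$. I apply Lemma~\ref{th:Conv_ellp} to each $D_j u$ to obtain $\|D_j u\|_{\ell^p} \le c_0^{-1}\|(\widetilde{D_j u})|_{\Z^d}\|_{\ell^p}$, and a brief reindexing of $\widetilde{D_j u}(x) = \sum_\eta D_j u(\eta)\,\zzz(x-\eta)$ yields the key identity
\[
\widetilde{D_j u}(x) \;=\; \tilu(x+e_j) - \tilu(x),
\]
which has two immediate consequences. First, $\widetilde{D_j u}$ is itself a $\zzz$-interpolant of a lattice function, so step~3 of Lemma~\ref{th:simple_stab} applies verbatim and yields $\|(\widetilde{D_j u})|_{\Z^d}\|_{\ell^p} \le C\|\widetilde{D_j u}\|_{\LL^p}$. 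Second, writing $\widetilde{D_j u}(x) = \int_0^1 \pp_j\tilu(x + t e_j)\,\dt$ and applying H\"older and Fubini gives $\|\widetilde{D_j u}\|_{\LL^p(\R^d)} \le \|\pp_j\tilu\|_{\LL^p(\R^d)}$.

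Chaining and summing over $j$ produces $\|\D\baru\|_{\LL^p} \le C\|Du\|_{\ell^p} \le C'\|\D\tilu\|_{\LL^p}$, which is the desired estimate. The main obstacle I anticipate is ensuring that all constants in the telescoping/H\"older step and in the Lemma~\ref{th:seminorm_lemma}-type local equivalence are controlled uniformly for $p \in [1,\infty)$; the conceptually critical observation, spotting the identity $\widetilde{D_j u} = \tilu(\cdot + e_j) - \tilu$ that turns the problem into a recycling of the earlier lemmas, is otherwise straightforward bookkeeping. The second bound in~\eqref{eq:interp_stab} for $p = \infty$ is covered directly by Young's inequality as already noted.
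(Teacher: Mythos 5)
Your proof is correct, and it takes a genuinely different route from the paper's. The paper sets $v(\xi) := \tilu(\xi)$ and splits $\| \D\baru \|_{\LL^p} \leq \| \D\baru - \D\barv \|_{\LL^p} + \| \D\barv \|_{\LL^p}$: the second term is controlled by a cell-by-cell seminorm equivalence between $\|\D\barv\|_{\LL^p(Q)}$ and $\|\D\tilu\|_{\LL^p(Q)}$, while the first is reduced via $\|\D\barw\|_{\LL^p} \leq C\|w\|_{\ell^p}$ and the $\ell^p$-isomorphism of $\Conv$ to $\|\tilu - \tilde{\tilde{u}}\|_{\ell^p}$, which is then bounded by a weighted Poincar\'e-type inequality (again via Lemma \ref{th:seminorm_lemma}). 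You instead route through the finite differences $D_j u$: a discrete inverse estimate $\|\D\baru\|_{\LL^p} \leq C\|Du\|_{\ell^p}$ obtained from $\sum_{\xi}\D\zz(\cdot-\xi)=0$ and telescoping, Lemma \ref{th:Conv_ellp} applied to each $D_j u$, and the commutation identity $\widetilde{D_j u} = \tilu(\cdot+e_j)-\tilu$, which turns the discrete difference into $\int_0^1 \pp_j\tilu(\cdot+te_j)\dt$ and closes the chain. Both arguments rest on the same two workhorses (Lemma \ref{th:seminorm_lemma} with translation invariance for $p$-uniform local constants, and Lemma \ref{th:Conv_ellp}); yours buys a cleaner final step by exploiting that quasi-interpolation commutes with lattice shifts, avoiding the double quasi-interpolant $\tilde{\tilde{u}}$ and the weighted Poincar\'e inequality, at the price of the telescoping/path bookkeeping in the first step (whose constants you correctly flag as needing, and admitting, $p$-uniform control). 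The upper bound is handled identically in both proofs.
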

\begin{proof}
  {\it 1. Upper bound. } Since $\tilu = \zz \ast \baru$ it follows
  that $\D\tilu = \zz \ast \D \baru$. Hence, the upper bound follows,
  e.g., from \cite[Sec. C.4]{EvansPDE}.

  {\it 2. Lower bound. } Let $v \in \Us$ be defined by $v(\xi) :=
  \tilu(\xi)$, then
  \begin{equation}
    \label{eq:interp:stab:5}
    \| \D\baru \|_{\LL^p} \leq \| \D\baru - \D\barv \|_{\LL^p} 
    + \|\D\barv\|_{\LL^p}.
  \end{equation}

  {\it 2.1 Estimating $\|\D\barv\|_{\LL^p}$. } This term can be
  estimated by a simple semi-norm equivalence argument.
  Fix a hypercube $Q \in \QQ$ and define $S_1(u) := \|\D
  \bar{\tilu}\|_{\LL^p(Q)}$ and $S_2(u) := \| \D
  \tilu\|_{\LL^p(Q)}$. $S_1, S_2$ are seminorms on $\Us$ and involve
  only a finite number of degrees of freedom. Moreover, if $S_2(u) =
  0$, then $\tilu$ is constant in $Q$, and consequently, $S_1(u) =
  0$. According to Lemma~\ref{th:seminorm_lemma}, there exists a
  constant $C > 0$, such that $S_1(u) \leq C S_2(u)$ for all $\tilu
  \in \Stil$. Due to translation invariance, the constant must be
  independent of the choice of $Q$. Summing over all cubes we obtain
  \begin{equation}
    \label{eq:interp:stab:7}
    \|\D\barv\|_{\LL^p(\Om)} \leq C \| \D\tilu \|_{\LL^p(\Om)}.
  \end{equation}

  {\it 2.2 Estimating $\| \D\baru - \D\barv \|_{\LL^p}$. } Suppose,
  first, that $1 < p < \infty$, then 
  \begin{align*}
    \| \D\barw \|_{\LL^p}^p =~& \int_\Om \B| \sum_{\xi \in \L} w(\xi)
    \D\zz(x - \xi) \B|^p \dx \\
    \leq~& \int_\Om \B( \sum_{\xi \in \L} |w(\xi)|^p |\D\zz(x - \xi)|  \B) \B(
    \sum_{\xi \in \L} |\D\zz(x - \xi)| \B)^{p/p'} \dx.
  \end{align*}
  Since $|\D\zz| \leq 1$, and since the number of lattice sites $\xi$
  such that $|\D\zz(x-\xi)| > 0$ is bounded independently of $x\in
  \R^d$, we obtain
  \begin{displaymath}
    \| \D\barw \|_{\LL^p}^p \leq C_1 \sum_{\xi \in \L} |w(\xi)|^p
    \int_\Om |\D\zz(x-\xi)| \dx.
  \end{displaymath}
  Using again $|\D\zz| \leq 1$ and the compact support of
  $\D\zz$ we obtain
  \begin{displaymath}
   \| \D\barw \|_{\LL^p(\Om)} \leq C_2 \| w \|_{\ell^p(\L)},
  \end{displaymath}
  for some constant $C_2$ that is independent of $w$.  Applying
  \eqref{eq:interp:norm_equiv_Lp} and reinserting the definition of
  $w$, we arrive at
  \begin{equation}
    \label{eq:interp:stab:10}
    \| \D\baru - \D\barv \|_{\LL^p(\Om)} \leq 
    C_3 \| \tilu - \tilv \|_{\ell^p(\L)}.
  \end{equation}

  \def\tiltilu{\tilde{\tilde{u}}}

  Since $\tilv$ is a quasi-interpolant of $\tilu$ we can bound the
  right-hand side of \eqref{eq:interp:stab:10} in terms of $\D
  \tilu$.  
  For a fixed lattice site $\xi \in \L$, we now obtain
  \begin{displaymath}
    \tilu(\xi) - \tilv(\xi) = \tilu(\xi) - \tiltilu(\xi) = \int_\Om
    \b[ \tilu(\xi) - \tilu(x) \b] \zz(x - \xi) \dx.
  \end{displaymath}
  Since $\int_\Om  \zz(x - \xi) \dx = 1$, this implies
  \begin{equation}
    \label{eq:interp:stab:15}
     \b|\tilu(\xi) - \tilv(\xi)\b| \leq
     \B(\int_\Om \b| \tilu(\xi) - \tilu(x) \b|^p \zz(x - \xi)
     \dx\B)^{1/p}
     =: S_1(\tilu).
  \end{equation}
  Moreover, let 
  \begin{displaymath}
    S_2(\tilu) := \B( \int_\Om |\D \tilu|^p \zz(x - \xi) \dx
    \B)^{1/p},
  \end{displaymath}
  then $S_1, S_2$ are both semi-norms on $\Stil$. Clearly, if
  $S_2(\tilu) = 0$ then $\tilu$ is constant in $\bar{Q}_\xi$, which
  implies that also $S_1(\tilu) = 0$. Applying Lemma
  \ref{th:seminorm_lemma}, we obtain a constant $C_4 > 0$ such that
  \begin{displaymath}
    S_1(\tilu) \leq C_4 S_2(\tilu) \qquad \forall \tilu \in \Stil.
  \end{displaymath}
  Due to translation invariance, the constant $C_4$ is independent of
  $\xi$. Combining this result with \eqref{eq:interp:stab:15},
  \eqref{eq:interp:stab:10} and the fact that $\zz(\bullet-\xi), \xi
  \in \L$ is a partition of unity, we obtain
  \begin{align}
   \| \D\baru - \D\barv \|_{\LL^p}^p 
    \notag
    \leq~& C_3^p \sum_{\xi \in \L} \b|\tilu(\xi) - \tilv(\xi)\b|^p  \\
    \label{eq:interp:stab:20}
    \leq~& C_3^pC_4^p \sum_{\xi \in \L} 
    \int_\Om | \D\tilu |^p \zz(x - \xi)
    \dx = (C_3C_4)^p \|\D\tilu \|_{\LL^p(\Om)}^p.
  \end{align}
  This concludes the estimate of $\| \D\baru - \D\barv \|_{\LL^p}$.

  Combining \eqref{eq:interp:stab:5}, \eqref{eq:interp:stab:7}, and
  \eqref{eq:interp:stab:20} yields the lower bound in
  \eqref{eq:interp_stab}, for $p \in (1, \infty)$. For $p = 1$ a minor
  variation of the argument gives the same result.
\end{proof}

Finally, we state some useful embedding results.

\begin{lemma}[Embeddings]
  \label{th:embeddings}
  (i) Let $p, q \in [1, \infty]$ and $i \leq j \in \{0, \dots, 3\}$; then
  there exists a constant $C$ such that
  \begin{equation}
    \label{eq:embedding_local}
    \| \D^j \tilu \|_{L^p(Q)} \leq C \| \D^i \tilu \|_{L^q(Q)} \qquad
    \forall u \in \Us, \quad \forall Q \in QQ.
  \end{equation}
  (ii) Moreover, if $p \geq q$, then 
  \begin{equation}
    \label{eq:embedding_global}
    \| \D^j \tilu \|_{L^p(\R^d)} \leq C \| \D^i \tilu \|_{L^q(\R^d)} \qquad
    \forall u \in \Us.
  \end{equation}
\end{lemma}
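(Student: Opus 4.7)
The plan is in two parts: establish (i) via a finite-dimensional semi-norm equivalence argument on a single cube, then deduce (ii) by summing the local estimates and invoking the standard sequence-space embedding $\ell^q \hookrightarrow \ell^p$ for $q \leq p$.

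For (i), the crucial observation is that by (Z2) the convolution kernel $\zzz$ has compact support, so for a fixed cube $Q \in \QQ$ only finitely many lattice sites $\xi$ satisfy $\tilde{\omega}_\xi \cap Q \neq \emptyset$. Consequently, the restriction space $\{ \tilu|_Q : u \in \Us \}$ is finite-dimensional. The maps $S_1(u) := \| \D^j \tilu \|_{L^p(Q)}$ and $S_2(u) := \| \D^i \tilu \|_{L^q(Q)}$ are semi-norms on this space. If $S_2(u) = 0$, then $\D^i \tilu \equiv 0$ on the connected set $Q$; since $\tilu \in W^{3,\infty}_{\rm loc}$ by Lemma~\ref{th:interp:regul}, this forces $\tilu|_Q$ to be a polynomial of degree strictly less than $i$, whence $\D^j \tilu|_Q = 0$ (as $j \geq i$) and $S_1(u) = 0$. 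Lemma~\ref{th:seminorm_lemma} therefore yields a constant $C$ such that $S_1 \leq C S_2$. Translation invariance of $\zzz$ and of the grid $\QQ$ ensures that $C$ is independent of the choice of $Q$.

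For (ii), abbreviate $a_Q := \| \D^j \tilu \|_{L^p(Q)}$ and $b_Q := \| \D^i \tilu \|_{L^q(Q)}$, so (i) reads $a_Q \leq C b_Q$. When $p, q < \infty$, summing and applying the elementary sequence-space embedding $\ell^q(\QQ) \hookrightarrow \ell^p(\QQ)$ (valid with norm $\leq 1$ since $p \geq q$) gives
\[
\| \D^j \tilu \|_{L^p(\R^d)} = \Big( \sum_{Q \in \QQ} a_Q^p \Big)^{1/p} \leq C \Big( \sum_Q b_Q^p \Big)^{1/p} \leq C \Big( \sum_Q b_Q^q \Big)^{1/q} = C \| \D^i \tilu \|_{L^q(\R^d)}.
\]
The endpoint cases in which $p$ and/or $q$ equals $\infty$ are handled analogously, replacing the relevant summations by suprema over $Q$.

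I do not anticipate any serious obstacle: the only slightly delicate point is verifying the inclusion $\ker S_2 \subseteq \ker S_1$ in the semi-norm lemma, but this reduces to the elementary fact that a sufficiently smooth function on a connected open set whose $i$-th derivatives vanish must be a polynomial of degree less than $i$.
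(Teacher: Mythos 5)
Your proposal is correct and follows essentially the same route as the paper: part (i) by local norm-equivalence on the finite-dimensional, translation-invariant spaces $\{\tilu|_Q\}$, and part (ii) by summing over the cubes and using $\ell^q \subset \ell^p$ for $q \leq p$. The paper's own proof is a two-line sketch of exactly this argument; you have merely supplied the standard details (the kernel inclusion and the summation), all of which check out.
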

\begin{proof}
  (i) The first estimate results from local norm-equivalence in each
  element $Q \in \QQ$, due to the fact that the spaces $\{ \tilu|_Q
  \sep u \in \Us \}$ are finite-dimensional and $Q$-independent.

  (ii) The second estimate is a consequence of the embedding $\ell^q
  \subset \ell^p$.
\end{proof}

\subsection{Discrete Sobolev spaces}
\label{sec:sobolev}
We define the discrete Sobolev spaces $\Ws^{1,p}$, $p \in [1,\infty]$,
by
\begin{displaymath}
  \Ws^{1,p} := \b\{ u \in \Us \bsep  \baru \in W^{1,p} \b\},
\end{displaymath}
equipped with the norm $\| u \|_{\Ws^{1,p}} := \| \baru
\|_{W^{1,p}}$. In view of the embedding \eqref{eq:embedding_global}
the discrete Sobolev norm is equivalent to the $\ell^p$-norm, and
hence, $\Ws^{1,p} = \ell^p$. We will introduce the more
natural {\em homogeneous Sobolev spaces} in the next sub-section.

\subsection{Discrete homogeneous Sobolev spaces}
We introduce function spaces that naturally arise in the analysis of
atomistic models and their approximations. We define the (semi-)norm
\begin{displaymath}
  \|u\|_{\Us^{1,p}} := \| \D \baru \|_{\LL^p}, \qquad \text{for } u \in
  \Us, \quad p \in [1, \infty].
\end{displaymath}
Since $\| \cdot \|_{\Us^{1,p}}$ does not penalize translations, we
define the equivalence classes
\begin{displaymath}
  [u] := \{ u + t \sep t \in \R^d\}, \qquad \text{for } u \in \Us.
\end{displaymath}
We will not make the distinction between $u$ and $[u]$, whenever it is
possible to do so without confusion, for example, when all quantities
involved in a statement are translation invariant.

For $p \in [1, \infty]$ we define the discrete function space
\begin{equation}
  \label{eq:defn_Us12}
  \Us^{1,p} := \b\{ [u] \bsep u \in \Us, \| \D \bar u \|_{\LL^p} <
  +\infty \b\},
\end{equation}
which we analyze in the following results. We will also require the
space of displacements for which the displacement gradient has compact
support,
\begin{displaymath}
  \Usz := \b\{ [v] \in \Us \sep {\rm supp}(\D \barv) \text{ is compact}\b\}.
\end{displaymath}

\begin{proposition}
  \label{th:interp:energy-space}
  $\Us^{1,p}$ is a Banach space. For $p \in [1, \infty)$, $\Usz$ is
  dense in $\Us^{1,p}$.
\end{proposition}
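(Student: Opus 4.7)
The plan is to prove completeness first and then density, using the norm equivalences of Theorem \ref{th:norm_equiv_Lp} and Lemma \ref{th:interp:norm_equiv} throughout to pass between lattice and continuous quantities.

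\emph{Completeness.} Given a Cauchy sequence $\{[u_n]\}$ in $\Us^{1,p}$, I fix representatives with $u_n(0) = 0$ for all $n$, so that $\{\D \baru_n\}$ is Cauchy in $L^p(\R^d; \R^{m \times d})$ and converges to some $G \in L^p$. The key step is to show that $\{u_n(\xi)\}$ is Cauchy in $\R^m$ for each fixed $\xi \in \Z^d$. Because $\baru_n(0) = 0$, this reduces to a Poincar\'e-type bound
\[
  |u_n(\xi) - u_m(\xi)| \le C_\xi \, \|\D \baru_n - \D \baru_m\|_{L^p(T_\xi)},
\]
where $T_\xi$ is a bounded tube enclosing the segment $[0, \xi]$. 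I will obtain this by averaging the fundamental theorem of calculus along the one-parameter family of parallel segments $t \mapsto x_0 + t \xi$, $x_0 \in B_{1/2}(0)$, combined with local Poincar\'e inequalities on the small cross-sectional balls at the two endpoints; pointwise values of the Lipschitz interpolants at lattice nodes are controlled by local $L^p$ norms thanks to the finite-dimensionality of interpolants on compact cubes. Setting $u_\infty(\xi) := \lim_n u_n(\xi)$ defines a lattice function, and because $\baru_n(x)$ is a finite linear combination of nearby lattice values we obtain $\baru_n \to \baru_\infty$ pointwise on $\R^d$, hence $\D \baru_n \to \D \baru_\infty$ a.e. Combined with $\D \baru_n \to G$ in $L^p$, this forces $\D \baru_\infty = G \in L^p$, so $u_\infty \in \Us^{1,p}$ and $[u_n] \to [u_\infty]$.

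\emph{Density for $p < \infty$.} Given $[u] \in \Us^{1,p}$, I construct approximants by a cutoff with a constant correction: pick $\chi_R \in C^\infty(\R^d; [0,1])$ with $\chi_R \equiv 1$ on $B_R$, $\chi_R \equiv 0$ outside $B_{2R}$, and $|\D \chi_R| \le C/R$; let $c_R \in \R^m$ be a suitable mean value of $\baru$ on the transition annulus $A_R := B_{2R} \setminus B_R$; and set
\[
  v_R(\xi) := \chi_R(\xi)\, u(\xi) + \b(1 - \chi_R(\xi)\b)\, c_R.
\]
Since $v_R \equiv c_R$ for $|\xi| \ge 2R$, the interpolant $\bar v_R$ is constant outside a bounded enlargement of $B_{2R}$, hence $\D \bar v_R$ has compact support and $v_R \in \Usz$. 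Differentiating the identity $\sum_\xi \zz(x-\xi) = 1$ from (Z3) gives $\sum_\xi \D \zz(x-\xi) = 0$, which I use to rewrite the gradient error in the crucial form
\[
  \D \baru(x) - \D \bar v_R(x) = \b(1 - \chi_R(x)\b) \D \baru(x) + \sum_{\xi \in \Z^d} \b[\chi_R(x) - \chi_R(\xi)\b]\b(u(\xi) - c_R\b) \D \zz(x - \xi).
\]

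The first term is bounded in $L^p$ by $\|\D \baru\|_{L^p(\R^d \setminus B_R)} \to 0$ via dominated convergence. The second term is the main obstacle and the place where $p < \infty$ is essential. Because $\D \zz$ has compact support of some radius $C_\zz$, the summand vanishes off an enlarged annulus $A_R' \supset A_R$, and on its support $|\chi_R(x) - \chi_R(\xi)| \le (C/R)|x-\xi| \le C'/R$. Applying the bound $\|\D \bar w\|_{L^p} \le C \|w\|_{\ell^p}$ derived in step~2.2 of the proof of Lemma \ref{th:interp:norm_equiv} controls the second term in $L^p$ by $(C/R)\, \|u - c_R\|_{\ell^p(A_R')}$. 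Invoking the $\ell^p$--$L^p$ equivalence of Theorem \ref{th:norm_equiv_Lp} and a Poincar\'e inequality for $\baru$ on the annulus (whose diameter is $\sim R$) with the choice of $c_R$ as its mean, this in turn is bounded by $C\, \|\D \baru\|_{L^p(A_R'')} \to 0$. The cancellation of the two factors of $R$ — one coming from $|\D \chi_R| \le C/R$ and one from the Poincar\'e scaling — is the delicate point, and it visibly breaks down for $p = \infty$ since the tail $\|\D \baru\|_{L^\infty(A_R'')}$ need not decay, consistent with $\Usz$ not being dense in $\Us^{1,\infty}$.
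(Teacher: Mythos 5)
Your argument is self-contained where the paper simply defers to \cite{OrSu:homsob:2012} (``translates verbatim''), and the completeness part and the density part for $d \geq 2$ are sound: the normalization $u_n(0)=0$, the tube/seminorm argument for pointwise control, the identity for $\D\baru - \D\bar v_R$, and the $R^{-1}\times R$ cancellation between the cutoff and the annular Poincar\'e inequality are all correct.

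However, there is a genuine gap in the density step for $d=1$, and it is exactly the point the paper's own proof singles out. Your construction forces $v_R \equiv c_R$ outside $B_{2R}$ with a \emph{single} constant $c_R$, i.e.\ you approximate by displacements that are constant outside a compact set. For $d=1$ the ``annulus'' $B_{2R}\setminus B_R$ is disconnected, so the Poincar\'e inequality with one mean value $c_R$ fails, and the conclusion itself is false for $p=1$: take $u(\xi) = \sign(\xi)$, so that $\D\baru$ is compactly supported and $u \in \Us^{1,1}$, yet any $v$ that is constant near $\pm\infty$ satisfies $\|\D\baru - \D\barv\|_{\LL^1} \geq \b|1-c_R\b| + \b|{-1}-c_R\b| \geq 2$. (For $d=1$, $p>1$ your estimate can be rescued, but only after an extra argument bounding $|c_R^+ - c_R^-| = o(R^{1/p'})$, which you do not give.) The repair is cheap and is precisely why the paper defines $\Usz$ by compact support of $\D\barv$ rather than of $v$: in $d=1$ use two constants $c_R^{\pm}$, one per component of the annulus, setting $v_R = c_R^{+}$ for $\xi \geq 2R$ and $v_R = c_R^{-}$ for $\xi \leq -2R$; the gradient of $\bar v_R$ is still compactly supported, so $v_R \in \Usz$, and the component-wise Poincar\'e inequality then closes the argument for all $p \in [1,\infty)$. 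You should make this case distinction explicit, since the paper's proof explicitly flags $d=p=1$ as the reason for admitting the larger class $\Usz$.
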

\begin{proof}
  The proof for continuous homogeneous Sobolev spaces \cite[Prop. 2.1
  and Thm. 2.1]{OrSu:homsob:2012} translates verbatim. 

  In these results, density of (equivalence classes of) displacements
  with compact support was shown, which is a small class than
  $\Usz$. Hence the case $d = p = 1$ was excluded. Here, we admit the
  slightly larger class and it is straightforward to show that $p = d
  = 1$ now also included in the result. (For $d > 1$ the two classes
  coincide.)
\end{proof}

Next, we analyze the convolution operator $\Conv$ on $\Us^{1,p}$.

\begin{lemma}
  \label{th:inverse_conv}
  Let $p \in [1, \infty)$ then $\Conv : \Us^{1,p} \to \Us^{1,p}$,
  $\Conv u := (\tilu(\xi))_{\xi \in \L}$, is an isomorphism. Moreover,
  we have the stronger estimate
  \begin{equation}
    \label{eq:error_inverse_conv}
    \b\|\Conv^{-1} u - u \b\|_{\ell^p} \leq C \| \D\baru \|_{\LL^p}.
  \end{equation}
\end{lemma}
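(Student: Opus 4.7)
The plan is to first derive a pointwise comparison $\|\Conv v - v\|_{\ell^p} \leq C\|\D\barv\|_{\LL^p}$ valid for every $v \in \Us$, and then leverage it together with the $\ell^p$-isomorphism of $\Conv$ (Lemma~\ref{th:Conv_ellp}) and the density of $\Usz$ in $\Us^{1,p}$ (Proposition~\ref{th:interp:energy-space}) to obtain both the isomorphism statement and the quantitative estimate \eqref{eq:error_inverse_conv}. The cornerstone identity is
\begin{equation*}
  \Conv v(\xi) - v(\xi) \,=\, \tilv(\xi) - \barv(\xi) \,=\, \int \zz(\xi - x)\b(\barv(x) - \barv(\xi)\b) \dx,
\end{equation*}
which is valid since $\barv$ interpolates $v$ at lattice points and $\int \zz \dx = 1$ (the latter following by integrating the partition-of-unity identity in (Z3) over a fundamental cell). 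Fixing a bounded neighborhood $\omega_\xi \supset \bar{\omega}_\xi$ and applying Lemma~\ref{th:seminorm_lemma} to $S_1(v) := |\Conv v(\xi) - v(\xi)|$ and $S_2(v) := \|\D\barv\|_{\LL^p(\omega_\xi)}$, on the finite-dimensional restriction of $\Us$ that determines $\barv|_{\omega_\xi}$---noting that $S_2(v) = 0$ forces $\barv$ to be constant on $\omega_\xi$ and hence $\tilv(\xi) = \barv(\xi) = v(\xi)$---produces $S_1 \leq C S_2$ with $C$ independent of $\xi$ by translation invariance. Summation with bounded overlap of $\{\omega_\xi\}_\xi$ then gives the claimed $\ell^p$ bound.

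With this core inequality, the strengthened estimate \eqref{eq:error_inverse_conv} becomes essentially algebraic: write
\begin{equation*}
  \Conv^{-1} u - u \,=\, \Conv^{-1}(u - \Conv u)
\end{equation*}
and combine the $\ell^p$-boundedness of $\Conv^{-1}$ from Lemma~\ref{th:Conv_ellp} with the core inequality. This composition is legitimate provided $u \in \ell^p$, which holds for $u \in \Usz$ if one selects the representative that vanishes outside the compact support of $\D\baru$. Boundedness of $\Conv : \Us^{1,p} \to \Us^{1,p}$ itself follows from step~2.1 of the proof of Lemma~\ref{th:interp:norm_equiv}, yielding $\|\D\overline{\Conv u}\|_{\LL^p} \leq C\|\D\tilu\|_{\LL^p} \leq C\|\D\baru\|_{\LL^p}$.

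To extend $\Conv^{-1}$ to an arbitrary $u \in \Us^{1,p}$, I would approximate $u$ by $u_n \in \Usz$ and set $w_n := \Conv^{-1} u_n \in \ell^p$. The identity $(w_n - u_n) - (w_m - u_m) = (\Conv^{-1} - \id)(u_n - u_m)$, combined with the estimate just obtained (applied to the difference $u_n - u_m \in \Usz$), shows that $\{w_n - u_n\}$ is Cauchy in $\ell^p$ with some limit $\phi$; setting $\Conv^{-1} u := u + \phi$ defines the inverse, continuity of $\Conv$ gives $\Conv \Conv^{-1} u = u$, and passing to the limit in $\|w_n - u_n\|_{\ell^p} \leq C\|\D\baru_n\|_{\LL^p}$ yields \eqref{eq:error_inverse_conv}. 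The main obstacle I anticipate is precisely this last extension step: a generic $u \in \Us^{1,p}$ lies in no $\ell^p$, so Lemma~\ref{th:Conv_ellp} cannot be invoked on $u$ directly, and the approximation argument must be reconciled with the equivalence-class-modulo-translations structure built into $\Us^{1,p}$.
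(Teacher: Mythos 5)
Your core estimate $\|\Conv v - v\|_{\ell^p} \le C\|\D\barv\|_{\LL^p}$ for all $v\in\Us$, obtained from the identity $\tilv(\xi)-v(\xi)=\int\zz(\xi-x)\b(\barv(x)-\barv(\xi)\b)\dx$ together with a translation-invariant seminorm argument, is precisely the ingredient the paper uses (it invokes it as ``repeating the argument following \eqref{eq:interp:stab:15}''). Where you diverge is in how you pass from this to the inverse on $\Us^{1,p}$: you restrict to $u\in\Usz$ so that a representative lies in $\ell^p$, invert there, and then extend by density, and you correctly flag the resulting bookkeeping (mixed $\ell^p$ / $\Us^{1,p}$ topologies, the quotient by translations) as the delicate point. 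The paper's proof shows this detour is unnecessary: for \emph{every} $u\in\Us^{1,p}$ the difference $\Conv u - u$ already lies in $\ell^p$ by the core estimate, so one may set $w:=\Conv^{-1}(\Conv u-u)\in\ell^p$ using only the $\ell^p$-isomorphism of Lemma \ref{th:Conv_ellp}, and check directly that $u^*:=u-w$ satisfies $\Conv u^*=u$; then $\Conv^{-1}u-u=-w$, so \eqref{eq:error_inverse_conv} follows from the $\ell^p$ bound on $\Conv^{-1}$ composed with the core estimate, while $\|\D\barw\|_{\LL^p}\le C\|w\|_{\ell^p}$ shows $u^*\in\Us^{1,p}$ and yields the isomorphism. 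In short: apply the $\ell^p$-inverse only to the difference $\Conv u-u$, never to $u$ itself, and the obstacle you anticipate dissolves. Your density route can be completed (the continuous embedding $\ell^p\hookrightarrow\Us^{1,p}$ lets you pass all limits in the $\Us^{1,p}$ seminorm and identify $\Conv(u+\phi)=u$ modulo translations), so I would call it correct but strictly longer than necessary, and it leaves the identification step only sketched where the paper's construction makes it immediate.
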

\begin{proof}
  Fix $u \in \Us^{1,p}$. We need to show that there exists $u^* \in
  \Us^{1,p}$ such that $\tilu^* = u$. This is equivalent to the
  statement that there exists $w \in \Us^{1,p}$ such that $\tilw =
  \tilu - u$. By repeating the argument following
  \eqref{eq:interp:stab:15} it follows that, in fact, $\tilu - u \in
  \ell^p$. Since we know that $\Conv : \ell^p \to \ell^p$ is an
  isomorphism, we can define $w := \Conv^{-1}(\tilu - u)$. Applying
  Lemma \ref{th:Conv_ellp} we obtain
  \eqref{eq:error_inverse_conv}. Since $\|\D \barw \|_{L^p} \leq C \|
  w \|_{\ell^p}$, it follows that $\Conv : \Us^{1,p} \to \Us^{1,p}$ is
  indeed an isomorphism.
\end{proof}

\subsection{A smooth nodal interpolant}
\label{sec:nodal_interp}

\def\Itil{\tilde{I}} In view of Lemma \ref{th:Conv_ellp} and Lemma
\ref{th:inverse_conv} we can now define the nodal interpolant
\begin{equation}
  \label{eq:defn_nodal_interp}
  \Itil u := \widetilde{\Conv^{-1} u}, \qquad \text{for } u \in
  \Us^{1,p} \cup \ell^p, \quad p \in [1, \infty).
\end{equation}
Lemma \ref{th:inverse_conv} immediately
implies the following norm-equivalence result.

\begin{lemma}
  \label{th:norm_equ_Itil}
  There exist constants $\tilde{c}_0, \tilde{c}_1$, such that
  \begin{equation}
    \label{eq:bounds_Du_nodal_interp}
    \tilde{c}_0 \| \D\baru \|_{\LL^p} \leq \|\D\Itil u \|_{\LL^p} 
    \leq \tilde{c}_1 \|\D\baru \|_{\LL^p} \qquad \forall u \in
    \Us^{1,p}, \quad p \in [1, \infty).
  \end{equation}
\end{lemma}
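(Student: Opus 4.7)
The plan is to unfold the definition $\Itil u = \widetilde{w}$ where $w := \Conv^{-1} u$, and then chain together two norm equivalences that have already been established in the paper. First, applying the gradient stability bound \eqref{eq:interp_stab} from Theorem \ref{th:norm_equiv_Lp} directly to $w$ gives
\[
c_1' \|\D \barw\|_{\LL^p} \leq \|\D \tilw\|_{\LL^p} \leq \|\D \barw\|_{\LL^p}.
\]
Since $\D \Itil u = \D \tilw$ by construction, the whole proof reduces to showing that $\|\D \barw\|_{\LL^p}$ and $\|\D \baru\|_{\LL^p}$ are equivalent, with constants independent of $u$.

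For this remaining equivalence, I would invoke Lemma \ref{th:inverse_conv}, which asserts that $\Conv : \Us^{1,p} \to \Us^{1,p}$ is an isomorphism. Because $\Us^{1,p}$ is a Banach space (Proposition \ref{th:interp:energy-space}), the open mapping theorem guarantees that both $\Conv$ and $\Conv^{-1}$ are bounded with respect to the homogeneous norm $\|v\|_{\Us^{1,p}} = \|\D \barv\|_{\LL^p}$. Writing $u = \Conv w$, this yields constants $A, B > 0$ (independent of $u$) such that
\[
A \,\|\D \baru\|_{\LL^p} \leq \|\D \barw\|_{\LL^p} \leq B \,\|\D \baru\|_{\LL^p}.
\]
Chaining the two displays gives \eqref{eq:bounds_Du_nodal_interp} with $\tilde{c}_0 = c_1' A$ and $\tilde{c}_1 = B$.

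I do not anticipate any real obstacle: as the prose preceding the statement indicates, the lemma is essentially a direct consequence of the preceding two results. The only mild subtlety is to make sure one really is applying \eqref{eq:interp_stab} to $w \in \Us^{1,p}$ (which is legitimate because $\Conv^{-1} u$ lies in $\Us^{1,p}$ by Lemma \ref{th:inverse_conv}) and not to $u$ itself. If one prefers to avoid appealing to the open mapping theorem, the upper-bound direction $\|\D \barw\|_{\LL^p} \leq B \|\D \baru\|_{\LL^p}$ can be obtained concretely from the quantitative estimate \eqref{eq:error_inverse_conv} combined with the standard bound $\|\D(\barw - \baru)\|_{\LL^p} \leq C\|w - u\|_{\ell^p}$ (proved along the lines of step~2.2 in the proof of Lemma~\ref{th:interp:norm_equiv}), and then the reverse inequality follows by the same argument applied to $\Conv$.
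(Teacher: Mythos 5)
Your argument is correct and matches the paper's: the paper offers no written proof beyond the remark that the result follows immediately from Lemma \ref{th:inverse_conv}, and your chain --- apply \eqref{eq:interp_stab} to $w = \Conv^{-1}u$ and then use boundedness of $\Conv$ and $\Conv^{-1}$ on $\Us^{1,p}$ --- is exactly the intended reasoning. Your closing remark identifies the cleaner route: the quantitative estimate \eqref{eq:error_inverse_conv}, combined with $\|\D(\barw-\baru)\|_{\LL^p}\leq C\|w-u\|_{\ell^p}$, already gives the equivalence of $\|\D\barw\|_{\LL^p}$ and $\|\D\baru\|_{\LL^p}$ with explicit constants, so the appeal to the open mapping theorem (which would a priori yield $p$-dependent constants) can be dispensed with.
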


\section{Discrete Deformation and Displacement Spaces}
\label{sec:a:defm_space}
The goal of this short section is to make rigorous the meaning of the
far-field boundary condition for a discrete deformation field $y : \Z^d \to
\R^d$,
\begin{equation}
  \label{eq:bc_vague}
  y(\xi) \sim \mA \xi \quad \text{as } |\xi| \to \infty.
\end{equation}

In order to rigorously define the far-field boundary condition
\eqref{eq:bc_vague} it is useful to analyze the asymptotic bahaviour
of displacements $u \in \Us^{1,p}$.

\begin{proposition}
  Let $p \in [1, \infty)$ and $u \in \Us^{1,p}$. 
  \begin{itemize}
  \item[(i)] If $p > d$, then $|u(\xi) - u(0)| \leq C \| u
    \|_{\Us^{1,p}} |\xi|^{1/p'}$;
  \item[(ii)] If $p = d$, then $|u(\xi) - u(0)| \leq C \| u
    \|_{\Us^{1,p}} (1+\log|\xi|)$;
  \item[(iii)] If $p < d$, then there exists a $u_0 \in [u]$ such that
    $u_0 \in \ell^{p*}$.
  \end{itemize}
\end{proposition}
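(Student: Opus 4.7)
The plan is to lift each assertion to a classical continuous Sobolev embedding applied to the interpolant $\baru$. Since $u \in \Us^{1,p}$ implies $\baru \in W^{1,p}_\loc(\R^d;\R^m)$ with $\D\baru \in L^p(\R^d)$ and $\|\D\baru\|_{L^p} = \|u\|_{\Us^{1,p}}$, and (Z4) yields the identity $\baru(\xi) = u(\xi)$ for every $\xi \in \Z^d$, all three statements reduce to pointwise or global $L^q$-information about a suitable representative of $\baru$.

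For part (i), I would apply Morrey's inequality to $\baru$: it provides a continuous representative satisfying $|\baru(\xi) - \baru(0)| \le C \|\D\baru\|_{L^p}|\xi|^{1-d/p}$. Since every $\xi \in \Z^d \setminus \{0\}$ has $|\xi| \ge 1$ and $1 - d/p \le 1 - 1/p = 1/p'$ for $d \ge 1$, this majorises the claimed bound. For part (ii), I would use a dyadic telescoping: setting $J := \lceil \log_2|\xi|\rceil$,
\[
  \baru(\xi) - \baru(0)
  = \b(\baru(\xi) - \baru_{B_{2^J}}\b)
  + \sum_{j=0}^{J-1} \b(\baru_{B_{2^{j+1}}} - \baru_{B_{2^j}}\b)
  + \b(\baru_{B_1} - \baru(0)\b),
\]
each dyadic difference is bounded by $C\|\D\baru\|_{L^d(B_{2^{j+1}})}$ via Poincar\'e on concentric balls; H\"older in $j$ then produces a factor $(1+\log|\xi|)^{1/d'} \le 1 + \log|\xi|$, while the two boundary terms are controlled by local Morrey/Poincar\'e estimates against $\|\D\baru\|_{L^d}$.

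For part (iii), I would invoke the homogeneous Sobolev embedding on $\R^d$: there exists a unique constant $c \in \R^m$ (the limit at infinity of ball-averages of $\baru$) such that $\baru - c \in L^{p^*}(\R^d)$ with $\|\baru - c\|_{L^{p^*}} \le C\|\D\baru\|_{L^p}$. Setting $u_0 := u - c \in [u]$, the interpolant $\overline{u_0} = \baru - c$ lies in $L^{p^*}$, and the chained lower bound $c_0 c_1 \|u_0\|_{\ell^{p^*}} \le \|\overline{u_0}\|_{L^{p^*}}$ from Theorem \ref{th:norm_equiv_Lp} delivers $u_0 \in \ell^{p^*}$.

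I expect the main obstacle to be precisely the justification of the homogeneous embedding used in (iii) for $\baru$ with merely $\D\baru \in L^p$ on the unbounded domain $\R^d$: standard Sobolev inequalities are formulated either on bounded domains or for compactly supported functions, whereas here the constant $c$ must be selected from an equivalence class. The cleanest route is a Riesz-potential estimate $|\baru(x) - c| \lesssim I_1(|\D\baru|)(x)$ combined with Hardy--Littlewood--Sobolev, but a self-contained alternative is to approximate $u$ by $u_n \in \Usz$ via Proposition \ref{th:interp:energy-space} (each $\baru_n$ is eventually constant, so the standard Sobolev inequality applies directly after subtracting its value at infinity), and pass to the limit after verifying that the subtracted constants converge.
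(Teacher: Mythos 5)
Your strategy---transfer everything to the interpolant $\baru$, which by Lemma \ref{th:interp:regul} lies in $W^{1,\infty}_{\loc}$ with $\D\baru\in L^p(\R^d)$ and agrees with $u$ at lattice points by (Z4), then invoke the classical embeddings of the homogeneous Sobolev space on $\R^d$---is exactly the paper's strategy; the difference is that the paper dispenses with all three items by first noting the inclusion $\Us^{1,p}\subset\Us^{1,\infty}$ and then citing Theorem~2.2 of the reference on continuous homogeneous Sobolev spaces, whereas you reprove the continuous embeddings from scratch (Morrey for $p>d$, dyadic telescoping of ball averages for $p=d$, and the homogeneous Gagliardo--Nirenberg--Sobolev inequality for $p<d$, which you correctly observe is cleanest via density of $\Usz$ from Proposition \ref{th:interp:energy-space} followed by a Cauchy-sequence argument in $L^{p^*}$). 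Your version buys self-containedness and makes explicit the selection of the representative $u_0\in[u]$ in (iii), which you then convert to $u_0\in\ell^{p^*}$ via Theorem \ref{th:norm_equiv_Lp}; the paper's version buys brevity. One step you should tighten: in (ii) with $d\ge 2$, a generic $W^{1,d}$ function has no well-defined point values, so ``local Morrey/Poincar\'e'' does not control the boundary terms $|\baru(\xi)-(\baru)_{B_1(\xi)}|$ by $\|\D\baru\|_{L^d}$ alone. The correct justification is that $\baru$ restricted to a neighbourhood of $\xi$ lives in a fixed finite-dimensional space, so Lemma \ref{th:seminorm_lemma} applied to $S_1(u):=|\baru(\xi)-(\baru)_{B_1(\xi)}|$ and $S_2(u):=\|\D\baru\|_{L^d(B_1(\xi))}$ (whose kernel condition holds since $\D\baru=0$ forces $\baru$ constant on the connected patch) gives the pointwise bound; this local inverse estimate is precisely the content of the inclusion $\Us^{1,p}\subset\Us^{1,\infty}$ that the paper records as its first step. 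With that repair the argument is complete and correct.
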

\begin{proof}
  First, we note that $\Us^{1,p} \subset \Us^{1,\infty}$ for all $p
  \in [1, \infty)$. The result therefore follows directly from
  \cite[Thm. 2.2]{OrSu:homsob:2012}.
\end{proof}

Proposition \ref{th:interp:energy-space} allows us to give a clear
interpretation to the far-field boundary condition
\eqref{eq:bc_vague}. Let $\mA \in \R^{d \times d}_+$, $\yA(x) := \mA
x$ for $x \in \R^d$, and let
\begin{equation}
  \label{eq:defn_YsA}
  \Ys^{1,p} := \b\{ [y] \bsep y \in \Us, [y - \yA] \in \Us^{1,p} \b\}.
\end{equation}
From Proposition \ref{th:interp:energy-space} we infer that, for all
$y \in [y] \in \Ys^{1,p}$,
\begin{displaymath}
  \frac{\b| y(\xi) - \mA \xi \b|}{|\xi|} \to 0, \quad \text{or equivalently,}
  \quad
  y(\xi) = \mA\xi + o(|\xi|),  \quad \text{as } |\xi| \to \infty.
\end{displaymath}
Motivated by this discussion we may confidently take $\Ys^{1,p}$ as
suitable classes of admissible deformations, or equivalently,
$\Us^{1,p}$ as suitable classes of admissible displacements.

\section{Approximation Error Estimates}
\label{sec:interp:errest}
\def\Ibar{\bar{I}}
\def\Jtil{\tilde{J}}
\subsection{Nodal interpolants}
%
In this section we prove two useful interpolation error estimates.  We
define the nodal first-order interpolant and extend the nodal
third-order interpolant, for $v \in C(\R^d; \R^m)$, by
\begin{align*}
  \Ibar v(x) := \sum_{\xi \in \Z^d} v(\xi) \zz(x - \xi), \qquad
  \text{and} \qquad
  \Itil v(x) := (\Itil (v|_{\Z^d}))(x),
\end{align*}
where the latter is well-defined provided that $v|_{\Z^d} \in
\Us^{1,p}$, $p \in [1, \infty]$. Moreover, for $v = \yA + u$, $u \in
\Us^{1,p}$, we can define $\Itil v = \yA + \Itil u$.

\begin{lemma}
  \label{th:interp:errest}
  Let $k \in \{1,2\}$, and $p \in (d/k, \infty]$ if $d > 1$; then
  there exists a constant $C_{\Ibar}$ such that, for all $v \in
  \WW^{k, p}(Q)$, and $Q \in \QQ$,
  \begin{displaymath} 
    \| \Ibar v - v \|_{\WW^{k, p}(Q)} \leq C_{\Ibar} \| \D^k v \|_{\LL^p(Q)}.
  \end{displaymath}
  In particular, if $v \in \WW^{k,p}_\loc$ with $\D^k v \in L^p$, then
  \begin{displaymath}
    \| \Ibar v - v \|_{\WW^{k, p}} \leq C_{\Ibar} \| \D^k v \|_{\LL^p}.
  \end{displaymath}

  (ii) Let $k \in \{1,2,3,4\}$, and $p \in (d/k, \infty]$ if $d > 1$;
  then there exists a constant $C_{\Itil}$ such that, for all $v \in
  \WW^{k, p}_\loc$ with $\D v \in W^{k-1,p}$, $[v|_{\Z^d}] \in
  \Us^{1,p}$ and
  \begin{displaymath}
    \| \Itil v - v \|_{\WW^{k, p}} \leq C_{\Itil} \| \D^k v \|_{\LL^p}.
  \end{displaymath}
\end{lemma}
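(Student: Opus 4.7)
My plan is to prove both estimates via a Bramble--Hilbert / seminorm-equivalence argument (in the spirit of Lemma~\ref{th:seminorm_lemma}), reducing to a local estimate on each cell $Q \in \QQ$ and then summing. As a preliminary, since $p > d/k$, the Sobolev embedding $W^{k,p}(\omega) \hookrightarrow C(\bar{\omega})$ on each bounded patch $\omega$ ensures that the nodal values $v(\xi)$ are bounded linear functionals, so both $\bar{I}v$ and $\tilde{I}v$ are well-defined and depend continuously on $v$ in the required norms.

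For part (i), I would fix $Q \in \QQ$ and set $\omega_Q := \bigcup\{\bar\omega_\xi : \bar\omega_\xi \cap Q \neq \emptyset\}$, which by (Z2) is a bounded, fixed-shape patch translating with $Q$. Assumption (Z3) immediately yields affine reproduction, $\bar{I}p = p$ for every affine $p$. For $k \in \{1,2\}$, the kernel of $S_2(v) := \|\D^k v\|_{L^p(\omega_Q)}$ consists of polynomials of degree $<k \leq 2$, which are reproduced by $\bar{I}$; hence $\ker S_2 \subset \ker S_1$ for $S_1(v) := \|\bar{I}v - v\|_{W^{k,p}(Q)}$, and Lemma~\ref{th:seminorm_lemma} applied to the finite-dimensional quotient $W^{k,p}(\omega_Q)/\mathcal{P}_{k-1}$ delivers $S_1 \leq C S_2$ with $C$ uniform in $Q$ by translation invariance. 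Raising to the $p$-th power and summing over $Q \in \QQ$ (using that by (Z2) each point lies in only a bounded number of patches) gives the global bound.

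Part (ii) follows the same scheme but requires two new ingredients. First, polynomial reproduction up to degree $3$ for $\tilde{I}$: since $\zzz = \zz * \zz$ and (Z3) is equivalent (via Poisson summation) to $\hat{\zz}$ having a second-order zero at every nonzero $2\pi k$ with $k \in \Z^d$, the transform of $\zzz$ has fourth-order zeros there, so the shifts of $\zzz$ reproduce polynomials of degree $\leq 3$; since $\tilde{I}$ is the unique interpolant in $\{\tilv : v \in \Us\}$ (Lemma~\ref{th:inverse_conv}), this yields $\tilde{I}p = p$ for every cubic polynomial $p$. Second, because $\tilde{I}$ is nonlocal, the single-cell seminorm-equivalence argument must accommodate the global dependence of $\tilde{I}v|_Q$ on $v$: I would localize by choosing a Taylor polynomial $p_Q$ of $v$ of degree $k-1$ on an enlarged patch, write $\tilde{I}v - v = \tilde{I}(v - p_Q) - (v - p_Q)$ on $Q$, and estimate the right-hand side by splitting $\Conv^{-1}$ into its near-field contribution (handled as in part (i)) and its tail (handled by the exponential decay of $g := \Fourier^{-1}(\hat{m}^{-1})$, which follows from the analyticity of $\hat{m}^{-1}$ established in the proof of Lemma~\ref{th:Conv_ellp}).

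The main obstacle is this nonlocality of $\tilde{I}$ in part (ii): the clean single-cell Bramble--Hilbert argument used for $\bar{I}$ does not directly apply, and one must quantify the decay of $\Conv^{-1}$ sufficiently to close the patchwise estimate with a constant independent of $Q$. A perhaps cleaner alternative is to bypass patchwise estimates altogether by subtracting a piecewise-polynomial approximation matching $v$ to order $k-1$ on each cell, invoking cubic reproduction to annihilate its image under $\tilde{I} - \id$, and bounding the remainder via the $W^{k,p}$-stability of $\tilde{I}$ (Lemma~\ref{th:norm_equ_Itil} together with Theorem~\ref{th:norm_equiv_Lp}) combined with cellwise averaged-Taylor/Poincar\'e estimates.
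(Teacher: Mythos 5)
Part (i) of your argument coincides with the paper's (a standard Bramble--Hilbert / seminorm-equivalence argument, which the paper simply delegates to Ciarlet), so there is nothing to add there. For part (ii), however, you take a genuinely different --- and in places gapped --- route. The paper's key observation is that the nonlocality of $\tilde I$ never needs to be localized: for an \emph{arbitrary} element $\tilde w$ of the spline space one writes $\|\tilde I v - v\|_{W^{k,p}} \le \|\tilde I(v-\tilde w)\|_{W^{k,p}} + \|\tilde w - v\|_{W^{k,p}}$ (using the projection property $\tilde I \tilde w = \tilde w$), controls the first term by the \emph{global} inverse estimate \eqref{eq:embedding_global} followed by the global $\ell^p$-boundedness of $\Conv^{-1}$ (Lemma \ref{th:Conv_ellp}) and the nodal stability of $\bar I$ (which is where $p>d/k$ enters), arriving at $\|\tilde I v - v\|_{W^{k,p}} \le C\,\|v - \tilde w\|_{W^{k,p}}$ for every $\tilde w$; the approximation property of the spline space is then supplied by a separate, purely \emph{local} Cl\'ement-type quasi-interpolant $\tilde J$ (Theorem \ref{th:Jtil_errest}). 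No decay estimate for $\Conv^{-1}$ and no patchwise splitting of the inverse is required. Your Strang--Fix derivation of cubic reproduction by the shifts of $\zzz$ is a legitimate alternative to Lemmas \ref{th:tilw_difference}--\ref{th:Itil_quad} and establishes the same fact.

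There are two concrete gaps. First, your claim that ``$\tilde I p = p$ for every cubic $p$'' does not follow from Lemma \ref{th:inverse_conv}: a cubic restricted to $\Z^d$ lies in neither $\ell^p$ nor $\Us^{1,p}$, so $\Conv^{-1}$ is not defined on it by the results you cite; one would first have to extend $\Conv^{-1}$ to polynomially growing sequences (possible using the decay of $g$, but it must be done). The paper sidesteps this entirely: it only ever uses $\tilde I\tilde w=\tilde w$ on the spline space, and cubic reproduction is exploited only inside the local operator $\tilde J$. Second, your ``cleaner alternative'' is not sound as stated: a piecewise-polynomial approximation matching $v$ cellwise is neither a global cubic nor an element of $\{\tilde w \,|\, w\in\Us\}$, so neither cubic reproduction nor the projection property annihilates its image under $\tilde I - \mathrm{id}$, and the cancellation you invoke fails. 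Your first strategy (localization plus kernel decay) can in principle be made to work --- exponential decay of $g$ does follow from analyticity of $\hat m^{-1}$ on a complex strip --- but the tail estimate, controlling the growth of $v-p_Q$ away from $Q$ against the decay of $g$ uniformly in $Q$ and then summing over $Q$, is exactly the bookkeeping the paper's global argument is designed to bypass, and your sketch does not carry it out.
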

\begin{proof}
  (i) The first statement follows from a standard Bramble--Hilbert
  argument; the condition $p > d/k$ ($p \geq 1$ if $d = 1$) ensures
  that the nodal values are well-defined and hence the interpolation
  operator is stable; see \cite{Ciarlet:1978} for various examples of
  the argument.

  (ii.1) We first prove that $[v|_{\Z^d}] \in \Us^{1,p}$. From (i) we
  know that $\bar{I} v - v \in W^{k,p}$ and in particular
  $\nabla\bar{I} v \in L^p$. By definition, this implies that
  $[v|_{\Z^d}] \in \Us^{1,p}$. Hence, $\tilde{I} v$ is well-defined.

  (ii.2) The interpolation error estimate for $\tilde{I}$ requires
  some care since the interpolation operator $\Itil$ and hence $\Itil$
  are defined through a linear system, i.e., they are non-local. To
  prove this result we choose an arbirary $w \in \Us^{1,p}$, and
  estimate
  \begin{displaymath}
    \| \Itil v - v \|_{W^{k,p}} \leq \| \Itil v - \tilw \|_{W^{k,p}}
    + \| \tilw - v \|_{W^{k,p}}.
  \end{displaymath}
  Clearly, $\Itil \tilw = \tilw$. Using \eqref{eq:embedding_global} we
  obtain
  \begin{align*}
    \| \Itil v - \tilw \|_{W^{k,p}} =\,& \| \Itil (v - \tilw)
    \|_{W^{k,p}} \leq C \| \Itil (v - \tilw) \|_{L^p} \leq C \|
    \Ibar(v-\tilw) \|_{L^p},
  \end{align*}
  where the last inequality follows from Lemma \ref{th:Conv_ellp} and
  from Lemma \ref{th:norm_equiv_Lp}. The assumption that $p > d/k$ if
  $d > 1$ ensures that $\Ibar$ is stable, that is, $\|
  \Ibar(v-\tilw) \|_{L^p} \leq C \| v - \tilw \|_{W^{k,p}}$. Thus, we
  have obtained that 
  \begin{displaymath}
     \| \Itil v - v \|_{W^{k,p}} \leq C \| \tilw - v \|_{W^{k,p}}.
  \end{displaymath}
  Now choosing $\tilw$ to be a suitable quasi-interpolant gives the
  desired result; this is provided by Lemma \ref{th:Jtil_errest} in
  the next sub-section.
\end{proof}

\begin{remark}
  Let $p > d/k$ if $d > 1$ and arbitrary otherwise (e.g., $p = 2$ and
  $d \in \{1,2,3\}$). Let $u \in \Us^{1,p}$ be a discrete
  displacement, e.g., the equilibrium displacement of an atomistic
  model. Then Lemma \ref{th:interp:errest} (ii) implies that, for any
  function $\hat{u} \in \WW^{k,p}$ interpolating $u$ in lattice sites, we
  have
  \begin{displaymath}
    \| \D^k \Itil u \|_{L^p} \leq C \| \D^k \hat{u} \|_{L^p},
  \end{displaymath}
  for some generic constant $C$. Thus, up to a generic constant,
  $\Itil u$ is the ``smoothest'' interpolant of $u$. In particular,
  $\D^k \Itil u$ is a canonical measure for the smoothness of the
  discrete map $u$.
\end{remark}

\subsection{A quasi-interpolant}
We are left to define and analyze the quasi-interpolant used in the
proof of Lemma \ref{th:interp:errest} (ii). The resulting interpolant
will also be interesting in its own right since it will remove the
restriction $p > d / k$ imposed on the nodal interpolant $\Itil$.

The main ideas of our construction are standard, however, the details
require some care.  The idea, following Cl\'{e}ment \cite{Clement}, is
to construct a bi-orthogonal basis function $\zzz^*$ with compact
support such that
\begin{equation}
  \label{eq:bi-orth_relation}
  \int \zzz^*(x) \zzz(\xi-x) \dx = \cases{1, & \xi = 0, \\ 0, & \text{otherwise,}}
\end{equation}
and to define
\begin{equation}
  \label{eq:defn_quasi_interp}
  \Jtil v(x) := \sum_{\xi \in \Z^d} (\zzz^* \ast v)(\xi) \zzz(x - \xi).
\end{equation}

\begin{lemma}
  \label{th:biorth_bas}
  There exists $\zzz^* \in L^\infty(\R^d)$ with support ${\rm
    supp}\,\zzz^* = \tilde\omega_0$, satisfying
  \eqref{eq:bi-orth_relation}.
\end{lemma}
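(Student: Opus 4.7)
The plan is to construct $\zzz^*$ via the standard biorthogonal dual basis technique, localized to $\tilde\omega_0$ by exploiting the compact support of $\zzz$. I first reduce \eqref{eq:bi-orth_relation} to a finite linear system. By (Z2), $\tilde\omega_0$ is compact, hence the set $N := \{\xi \in \Z^d : (\xi - \tilde\omega_0) \cap \tilde\omega_0 \neq \emptyset\}$ is finite and contains $0$. For any $\zzz^*$ supported in $\tilde\omega_0$, the integral $\int \zzz^*(x) \zzz(\xi - x)\dx$ vanishes automatically for $\xi \notin N$ since $\zzz(\xi-\cdot)$ is supported in $\xi - \tilde\omega_0$. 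Thus only the $|N|$ conditions $\int \zzz^*(x) \zzz(\xi-x)\dx = \delta_{\xi,0}$, $\xi \in N$, need to be enforced.

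I next take the finite-dimensional ansatz $\zzz^*(x) := \chi_{\tilde\omega_0}(x) \sum_{\eta \in N} a_\eta \zzz(\eta - x)$, which belongs to $L^\infty(\R^d)$ by (Z1) and is supported in $\tilde\omega_0$. Substituting into the conditions above yields the linear system $Ga = e_0$ in $\R^N$, with Gram matrix $G_{\xi\eta} := \int_{\tilde\omega_0} \zzz(\xi-x) \zzz(\eta-x)\dx$ and $e_0$ the standard basis vector at $\xi = 0$. Hence existence of $\zzz^*$ reduces to invertibility of $G$, equivalently to linear independence of the restricted translates $\{\zzz(\eta - \cdot)|_{\tilde\omega_0}\}_{\eta \in N}$ in $L^2(\tilde\omega_0)$.

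Proving this linear independence is the main obstacle. For the concrete choices of Section \ref{sec:examples}, $\zzz$ is piecewise polynomial on the cells of $\QQ$ (tensor-product cubic B-splines in the Q1 case, their simplicial analogue in the P1 case), and independence follows cell-by-cell by dimension counting inside the relevant polynomial space, as in classical spline theory. For a general $\zz$ satisfying only (Z1)--(Z4), my strategy is to argue by contradiction: if $F := \sum_{\eta \in N} c_\eta \zzz(\eta - \cdot) \equiv 0$ on $\tilde\omega_0$, then using $\zzz = \zz \ast \zz$ one writes $F$ as a convolution of $\zz$ with the compactly supported function $H(y) := \sum_\eta c_\eta \zz(\eta-y)$; the nodal property (Z4) gives $H(\xi) = c_\xi$ at lattice sites, and combining the vanishing of the convolution on $\tilde\omega_0$ with the Riesz-basis property $\min_\alpha \hat m(\alpha) \geq c_0 > 0$ from the proof of Lemma \ref{th:Conv_ell2} forces all $c_\eta = 0$. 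Once $G$ is invertible, $a := G^{-1} e_0$ produces $\zzz^*$ satisfying \eqref{eq:bi-orth_relation} with support in $\tilde\omega_0$; strict support equality ${\rm supp}\,\zzz^* = \tilde\omega_0$ can be arranged, if needed, by adding any $L^\infty(\tilde\omega_0)$ function that is $L^2$-orthogonal to every $\zzz(\eta-\cdot)|_{\tilde\omega_0}$, which exists because the span of the latter has only finitely many dimensions while $L^\infty(\tilde\omega_0)$ has infinitely many.
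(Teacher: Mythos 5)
Your overall strategy coincides with the paper's: seek $\zzz^*$ as a linear combination of the restrictions $\zzz(\eta-\cdot)|_{\tilde\omega_0}$ and reduce \eqref{eq:bi-orth_relation} to a finite linear system. The paper performs the same reduction via the Riesz representation theorem on the finite-dimensional span $\mathscr{X}$ rather than via the Gram matrix $G$, but this is the identical computation. Note that you actually need less than invertibility of $G$: since $G$ is symmetric positive semi-definite, $Ga=e_0$ is solvable iff $e_0\perp\ker G$, i.e.\ iff $c_0=0$ whenever $\sum_\eta c_\eta\,\zzz(\eta-\cdot)=0$ a.e.\ on $\tilde\omega_0$ --- which is exactly the weaker condition the paper's functional argument isolates.

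The genuine gap is in your justification of this (local) independence for general $\zz$. The bound $\min_\alpha\hat m(\alpha)\geq c_0>0$ from Lemma \ref{th:Conv_ell2} controls the lattice values $\sum_\eta m(\xi-\eta)c_\eta$ for \emph{all} $\xi\in\Z^d$ simultaneously; it lets you conclude $c=0$ only if the combination $F=\sum_\eta c_\eta\,\zzz(\eta-\cdot)$ is known to vanish at every lattice site (or on all of $\R^d$). Your contradiction hypothesis gives $F=0$ only on the compact set $\tilde\omega_0$, and the values of $F$ at lattice sites outside $\tilde\omega_0$ are completely unconstrained, so the multiplier bound yields nothing. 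This is precisely the distinction between \emph{global} and \emph{local} linear independence of the translates, and the local version is the real content of the lemma (in the Q1 case it is the classical local linear independence of B-splines that you invoke; for a general $\zz$ satisfying only (Z1)--(Z4) it does not follow from the Riesz-basis property). As written, the sentence ``forces all $c_\eta=0$'' is an assertion, not a proof. To be fair, the paper's own justification at the same point (deducing the well-definedness of the coefficient functional on restrictions to $\tilde\omega_0$ from linear independence ``in $\R^d$'') is equally terse and faces the same local-versus-global issue; but since your write-up explicitly flags this as the main obstacle and then does not close it, the proof is incomplete. The remainder --- the reduction to a finite system over $N$, and the closing remark on arranging ${\rm supp}\,\zzz^*=\tilde\omega_0$ exactly by adding an $L^\infty$ function orthogonal to the finite-dimensional span --- is correct, and the latter even addresses a detail the paper glosses over.
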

\begin{proof}
  Let $X := \{ \xi \in \Z^d \sep |\tilde\omega_\xi \cap
  \tilde\omega_0| > 0 \}$, and let $\mathscr{X} := {\rm span}\{
  \zzz_\xi \sep \xi \in X \}$, where $\zzz_\xi = \zzz(\cdot - \xi)$.
  We seek $\zzz^*$ of the form
  \begin{displaymath}
    \zzz^*(x) := \cases{\sum_{\xi \in X} a_\xi \zzz(x - \xi), & x \in
      \tilde\omega_0, \\
      0, & \text{otherwise},}
  \end{displaymath}
  for some parameters $a_\xi$. 

  Since the functions $\zzz_\xi$ are linearly independent in $\R^d$,
  $\zzz_0$ cannot be written as a linear combination of $\zzz_\xi, \xi
  \in X$. Therefore, the mapping 
  \begin{displaymath}
    \mathscr{X} \to \R, \qquad \sum_{\xi \in X} b_\xi \zzz(x - \xi)|_{x \in
      \tilde\omega_0} \mapsto b_0,
  \end{displaymath}
  is well-defined and a linear functional on $\mathscr{X}$. Thus, by
  the Riesz representation theorem, for $\mathscr{X}$ equipped with
  the $L^2$ inner product, there exist coefficients $(a_\xi)_{\xi \in
    X}$ such that
  \begin{displaymath}
    \int_{\tilde\omega_0} \zzz^*(x) \zzz(x-\xi) \dx = \cases{1, & \xi =
      0, \\
      0, & \text{otherwise}.}
  \end{displaymath}
  Since ${\rm supp}(\zzz^*) = \tilde\omega_0$ the result follows.
\end{proof}

\begin{lemma}
  \label{th:tilw_difference}
  Let $\zz_1$ and $\zz_2$ be two different functions satisfying
  (Z1)--(Z3), let $\zzz_i := \zz_i \ast \zz_i$, and let $p(x)$ be a
  cubic polynomial.  Then
  \[
  \sum_{\xi\in\Z^d} p(\xi) \big(\zzz_2(x-\xi)-\zzz_1(x-\xi)\big).
  \]
\end{lemma}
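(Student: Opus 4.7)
My approach is to Taylor-expand the cubic polynomial $p$ about the evaluation point $x$ and exploit the $\Z^d$-periodicity of the resulting lattice sums.  Since $\deg p \le 3$, writing $p(\xi) = \sum_{|\alpha|\le 3} \frac{D^\alpha p(x)}{\alpha!}\,(\xi-x)^\alpha$ turns the target sum into
\[
  \sum_{\xi\in\Z^d} p(\xi)\,\zzz(x-\xi)
   = \sum_{|\alpha|\le 3} \frac{(-1)^{|\alpha|}\,D^\alpha p(x)}{\alpha!}\, P_\alpha(x),
  \qquad
  P_\alpha(x) := \sum_{\xi\in\Z^d} (x-\xi)^\alpha\,\zzz(x-\xi).
\]
Each $P_\alpha$ is $\Z^d$-periodic in $x$ (translating $x$ by an integer vector merely re-indexes the lattice sum), and the central claim is that $P_\alpha$ is in fact a \emph{constant} function of $x$ for every $|\alpha|\le 3$.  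Granted this, each of $\sum_\xi p(\xi)\zzz_i(x-\xi)$, $i=1,2$, is a polynomial in $x$ of degree $\le 3$ with coefficients depending only on the moments of $\zz_i$, and the asserted identity for the difference (a polynomial in $x$) follows.

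To prove that $P_\alpha$ is constant I would first establish an auxiliary consequence of (Z3) not spelled out earlier in the paper: the first moment of $\zz$ vanishes, $\int_{\R^d} y_i\,\zz(y)\,\dd y = 0$.  The first-order part of (Z3) can be rewritten as the identity $\sum_{y\in x-\Z^d} y_i\,\zz(y) = 0$ for every $x\in\R^d$; integrating this identity over $x \in [0,1]^d$ and unfolding the disjoint-union structure of the shifted lattice yields the desired moment vanishing.  A short convolution computation then gives the same property for $\zzz = \zz\ast\zz$.

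Armed with the partition of unity and the vanishing first moment, the cases $|\alpha|=0$ and $|\alpha|=1$ are immediate: $P_0\equiv 1$ and $P_{e_i}\equiv 0$.  For $|\alpha|=2,3$ I would use $\zzz = \zz\ast\zz$ together with the binomial expansion of $(x-\xi)^\alpha$ to reduce $P_\alpha(x)$ to evaluating $(\zz \ast h_\gamma)(x)$, where $h_\gamma(x) := \sum_{y\in x-\Z^d} y^\gamma\,\zz(y)$ is $\Z^d$-periodic. Splitting the convolution integral into translates of the unit cube $[0,1]^d$ and applying the partition-of-unity property collapses each such convolution to the constant $\int_{[0,1]^d} h_\gamma = \int_{\R^d} y^\gamma\,\zz(y)\,\dd y$; iterating this identity through the expansion yields constancy of $P_\alpha$ for $|\alpha|\le 3$.

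The main obstacle is the convolution-with-periodic-function identity $(\zz\ast g)(x) = \int_{[0,1]^d} g$ for $\Z^d$-periodic $g$, where the partition-of-unity hypothesis of (Z3) does the essential work; once this identity is in place the moment calculations propagate through all the $|\alpha|\le 3$ cases without further difficulty. A parallel Fourier-analytic route via Poisson summation is equally viable: it reduces the constancy of $P_\alpha$ to verifying that $D^\beta \hat{\zz}(k)=0$ for $k\in\Z^d\setminus\{0\}$ and $|\beta|\le 1$, after which the Leibniz rule applied to $\hat{\zzz}=\hat{\zz}^2$ gives $\widehat{y^\alpha\zzz}(k)=0$ for $|\alpha|\le 3$ and $k\ne 0$, so that the only surviving Poisson mode is the constant one.
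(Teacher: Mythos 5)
Your argument is correct, and it takes a genuinely different route from the paper's. The paper never expands $p$: it works with $\delta\zz:=\zz_2-\zz_1$, uses (Z3) to write $\delta\zz=\sum_{k,\ell}D_\ell D_k f^{(\ell,k)}$ as a double discrete divergence of compactly supported functions, and sums by parts twice so that $D_\ell D_k$ lands on $p$ and reduces its degree to one (the cross term $\delta\zz\ast\delta\zz$ picks up four difference operators and is annihilated by a cubic). You instead prove the stronger per-basis statement that $P_\alpha(x)=\sum_{\xi}(x-\xi)^\alpha\zzz_i(x-\xi)$ is constant for $|\alpha|\le 3$ --- in effect the Strang--Fix conditions of order $3$ for $\zzz_i=\zz_i\ast\zz_i$, obtained by squaring the order-$1$ conditions encoded in (Z3) --- so that $\sum_\xi p(\xi)\zzz_i(x-\xi)-p(x)$ is affine for each $i$ separately, and the difference is affine. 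Two points to tighten. The lemma's conclusion is missing from the statement; from its use in Lemma \ref{th:Itil_quad} it must read ``is an affine function of $x$'', and your computation gives exactly that because $P_0\equiv 1$ and $P_{e_i}\equiv 0$ force the degree-$\ge 2$ parts to coincide with those of $p$ and cancel --- your phrase ``a polynomial in $x$'' undersells this. Also, in the $|\alpha|\in\{2,3\}$ step the binomial expansion yields circular convolutions $\int_{[0,1]^d}h_\gamma(x-y)\,h_{\alpha-\gamma}(y)\dy$, and the reason each is constant is that $|\gamma|+|\alpha-\gamma|\le 3$ forces one factor to be $h_0\equiv 1$ or $h_{e_i}\equiv 0$; this pigeonhole is where ``cubic'' enters and should be stated explicitly. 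Your route buys a cleaner payoff: it establishes cubic reproduction for each $\zzz$ directly, so Lemma \ref{th:Itil_quad} would no longer need the detour through the Q1/B-spline reference basis, whereas the paper's summation-by-parts argument avoids moment and Fourier computations but only controls the difference of two bases and leans on the lightly justified existence of the potentials $f^{(\ell,k)}$.
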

\begin{proof}
Denote $\delta\zz := \zz_2-\zz_1$.
From (Z3) we have that
\[
\sum_{\xi\in\Z^d} \delta\zz(x-\xi) = 0.
\]
Thus, there exist Lipschitz functions $f_k : \R^d \to \R^d$,
$k=1,\ldots,d$, with compact support,
such that
\[
\delta\zz(x) = \sum_{k=1}^d D_k f^{(k)}(x),
\]
where $D_k u(x) := u(x+e_k) - u(x)$.

Further applying summation by parts, we have that, for all $\ell=1,\ldots,d$ and $x\in\R^d$,
\begin{align*}
0
=~&
\sum_{\xi\in\Z^d} \xi_\ell \delta\zz(x-\xi)
 =
\sum_{\xi\in\Z^d} \xi_\ell \sum_{k=1}^d D_k f^{(k)}(x-\xi)
\\ =~&
\sum_{\xi\in\Z^d} \sum_{k=1}^d (D_k \xi_\ell) f^{(k)}(x-\xi)
=
\sum_{\xi\in\Z^d} \sum_{k=1}^d \delta_{k,\ell} f^{(k)}(x-\xi)
= \sum_{\xi\in\Z^d} f^{(\ell)}(x-\xi).
\end{align*}
From the last identity, we have that 
\[
f^{(\ell)}(x) = \sum_{k=1}^d D_k f^{(\ell,k)}(x)
\]
and hence
\[
\delta\zz(x) = \sum_{k,\ell=1}^d D_\ell D_k f^{(\ell,k)}(x).
\]

We can now return to proving the statement of the lemma.
Using the representation
\[
\zzz_2
= \zz_1*\zz_1+2 \zz_1*\delta\zz+\delta\zz*\delta\zz
= \zzz_1+ (2 \zz_1+\delta\zz)*\delta\zz
\]
we have
\begin{align*}
~&
\sum_{\xi\in\Z^d} p(\xi) \big(\zzz_2(x-\xi)-\zzz_1(x-\xi)\big)
\\=~&
\sum_{\xi\in\Z^d} p(\xi) \big((2 \zz_1+\delta\zz)*\delta\zz\big)(x-\xi)
\\=~&
\sum_{\xi\in\Z^d} \sum_{\ell, k=1}^d p(\xi) \big((2 \zz_1+\delta\zz)*D_\ell D_k f^{(\ell,k)}\big)(x-\xi)
\\=~&
\sum_{\xi\in\Z^d} \sum_{\ell, k=1}^d p(\xi) D_\ell D_k \big((2 \zz_1+\delta\zz)*f^{(\ell,k)}\big)(x-\xi)
\\=~&
\sum_{\xi\in\Z^d} \sum_{\ell, k=1}^d (D_\ell D_k p(\xi)) \big((2 \zz_1+\delta\zz)*f^{(\ell,k)}\big)(x-\xi).
\end{align*}
Since $D_\ell D_k p(\xi)$ is a linear function for all $\ell$ and $k$ and $\zz_1$ satisfies (Z3), we have that
\[
\sum_{\xi\in\Z^d} \sum_{\ell, k=1}^d (D_\ell D_k p(\xi)) \big(2
\zz_1*f^{(\ell,k)}\big)(x-\xi) = 2 \sum_{\ell,k = 1}^d \int D_\ell D_k p(x-y) f^{(\ell,k)}(y) \dy
\]
is indeed a linear function of $x$.
%
%
It can be shown that the remaining part is zero:
\begin{align*}
~&
\sum_{\xi\in\Z^d} \sum_{\ell, k=1}^d (D_\ell D_k p(\xi)) \big(\delta\zz*f^{(\ell,k)}\big)(x-\xi)
\\=~&
\sum_{\xi\in\Z^d} \sum_{\ell, k=1}^d \sum_{\ell', k'=1}^d
(D_\ell D_k p(\xi)) \big(D_{\ell'} D_{k'} f^{(\ell',k')}*f^{(\ell,k)}\big)(x-\xi)
\\=~&
\sum_{\xi\in\Z^d} \sum_{\ell, k=1}^d \sum_{\ell', k'=1}^d
(D_{\ell'} D_{k'} D_\ell D_k p(\xi)) \big(f^{(\ell',k')}*f^{(\ell,k)}\big)(x-\xi)
= 0 
\end{align*}
since $D_{\ell'} D_{k'} D_\ell D_k p(\xi)\equiv 0$.
\end{proof}

\begin{lemma}
  \label{th:Itil_quad}
  Let $p(x)$ be a cubic polynomial, then there exists $w \in \Us$
  such that $\tilw = p$.
\end{lemma}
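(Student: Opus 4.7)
The plan is to reduce to the concrete Q1 basis $\zz_1$ from \eqref{eq:q1_interpolation} by means of the transfer Lemma \ref{th:tilw_difference}, and proceeds in three steps.

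First, I would verify that the map $w \mapsto \tilw$ reproduces affine functions for \emph{any} $\zz$ satisfying (Z1)--(Z4). Integrating the identity in (Z3) over a fundamental cell gives $\int\zz\dx = 1$, whence a short convolution calculation yields
\[
  \sum_{\xi\in\Z^d}\zzz(x-\xi) = 1 \quad\text{and}\quad \sum_{\xi\in\Z^d}\xi\,\zzz(x-\xi) = x - c
\]
where $c := \int y\,\zz(y)\dy$. Hence for any affine $\ell(x) = \alpha + \beta\cdot x$ the lattice function $w(\xi) := (\alpha + \beta\cdot c) + \beta\cdot\xi$ satisfies $\tilw = \ell$.

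Next, I would establish the statement for the Q1 basis $\zz_1$. Here $\zzz_1 = \zz_1\ast\zz_1$ is the standard tensor-product cubic B-spline, and in one spatial factor I would compute $\sum_{\xi}\xi^k\,\zzz_1(x-\xi)$ for $k=0,1,2,3$ directly, obtaining a polynomial in $x$ of degree exactly $k$ with leading coefficient $1$ (the lower-order corrections being controlled by the moments of $\zzz_1$, and vanishing at odd orders by evenness). Equating coefficients in the resulting triangular system produces, for every cubic polynomial $p$, a cubic polynomial $q$ in $\xi$ such that $\sum_\xi q(\xi)\,\zzz_1(x-\xi) = p(x)$; the $d$-dimensional case is an immediate tensor-product consequence.

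Third, given a general $\zz_2$ satisfying (Z1)--(Z4) and a cubic $p$, let $q$ be the polynomial from Step~2. Applying Lemma \ref{th:tilw_difference} to the cubic polynomial $q$ then gives
\[
  \sum_{\xi\in\Z^d} q(\xi)\,\zzz_2(x-\xi) = \sum_{\xi\in\Z^d} q(\xi)\,\zzz_1(x-\xi) + \ell(x) = p(x) + \ell(x)
\]
for some affine function $\ell$. By Step~1 applied to $\zz_2$, there exists $r \in \Us$ with $\tilde{r} = -\ell$, and so $w := q + r$ satisfies $\tilw = p$, which proves the lemma.

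The main obstacle is Step~2: the explicit verification that the tensor-product cubic B-spline reproduces every cubic polynomial using \emph{polynomial} coefficients --- the polynomial nature of the coefficients is crucial, since it is precisely what allows Lemma \ref{th:tilw_difference} to be invoked in Step~3. The other two steps are short: Step~1 is a direct computation from (Z3), and Step~3 simply combines the transfer lemma with the affine reproduction property.
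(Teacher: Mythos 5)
Your proof is correct and follows essentially the same route as the paper: reproduce affine functions from (Z3), establish the cubic case for the Q1 basis, and transfer to a general $\zz$ via Lemma \ref{th:tilw_difference}, correcting the resulting affine discrepancy. The one place you are more careful than the paper is in insisting that the Q1 coefficients representing $p$ be evaluations of a cubic polynomial in $\xi$ (your Step~2), which is indeed required for Lemma \ref{th:tilw_difference} to apply; the paper instead obtains the Q1 representation by citing the multi-cubic B-spline literature and leaves this point implicit.
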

\begin{proof}
  Assume, without loss of generality, that $m = 1$.

  First, suppose, that $p(x) = a^T x + c$ is linear. Let $v(\xi) :=
  a^T\xi + c$, then by (Z3), $\barv(x) = p$. Hence $\tilv = \zz \ast
  \barv = p$ as well, since the convolution with $\zz$ preserves
  linear functions.
  
  Now notice that, if $\zz$ is the Q1-nodal basis function given by
  \eqref{eq:q1_interpolation}, then $\{\tilw \sep w\in\Us\}$, is the
  set of all multi-cubic B-splines and therefore contains $p$
  \cite{Hollig2003}; i.e., $p=\tilv$ for some $v \in \Us$.  For a
  general $\zz$, Lemma \ref{th:tilw_difference} guarantees that $\tilv
  - p$ is a linear function, where $v \in \Us$ is the function we just
  constructed. Hence letting $w := v - (\tilv - p)|_{\Z^d}$ and using
  again the fact that convolution with $\zz$ leaves linear functions
  invariant, yields $\tilw = \tilv - (\tilv - p) = p$.
\end{proof}

\begin{theorem}
  \label{th:Jtil_errest}
  Let $\Jtil$ be defined by \eqref{eq:defn_quasi_interp} with $\zzz^*$
  from Lemma \ref{th:biorth_bas} and let $0 \leq j \leq k \leq 4$;
  then, for all $v \in W^{k, p}_\loc, \D v \in W^{k-1,p}$, we have
  \begin{displaymath}
    \b\|\D^j (\Jtil v - v) \b\|_{L^p} \leq C_{\Jtil} \| \D^k v \|_{L^p}.
  \end{displaymath}
\end{theorem}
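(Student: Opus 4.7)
The plan is to follow the classical Cl\'{e}ment--Bramble--Hilbert strategy: first show that $\Jtil$ reproduces polynomials up to degree three, then derive a local stability bound, and finally subtract off a local polynomial approximation on each unit cell.

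The polynomial reproduction is a direct consequence of the bi-orthogonality of $\zzz^*$. For any $\eta \in \Z^d$,
\begin{displaymath}
  (\zzz^* \ast \zzz(\cdot - \eta))(\xi)
  = \int \zzz^*(y)\, \zzz(\xi - \eta - y) \dy
  = \del_{\xi,\eta},
\end{displaymath}
so $\Jtil$ acts as the identity on every (pointwise-finite) linear combination of shifts of $\zzz$. Lemma \ref{th:Itil_quad} supplies, for each cubic polynomial $p$, coefficients $w(\eta)$ such that $p(x) = \sum_{\eta} w(\eta) \zzz(x-\eta)$, the sum being pointwise finite by compact support of $\zzz$. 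Exchanging the sum with the convolution against the compactly supported $\zzz^*$, we obtain $\Jtil p = p$ for all $p \in \mathbb{P}_3$, and hence for all $p \in \mathbb{P}_{k-1}$ since $k \leq 4$.

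Next I would establish local stability. For each unit cell $Q \in \QQ$, only finitely many indices $\xi$ contribute to $\Jtil v|_Q$ (by compact support of $\zzz$), and each coefficient $(\zzz^* \ast v)(\xi)$ depends only on $v|_{\xi + \tilde{\omega}_0}$ (by compact support of $\zzz^*$). Therefore there is an enlarged patch $\omega_Q \supset Q$, obtained by inflating $Q$ by a constant depending only on $\zzz$, such that $\Jtil v|_Q$ depends only on $v|_{\omega_Q}$, and H\"{o}lder's inequality together with $\zzz^* \in L^\infty$ and the regularity of $\zzz$ yields
\begin{displaymath}
  \|\D^j \Jtil v\|_{L^p(Q)} \leq C \| v \|_{L^p(\omega_Q)} \qquad \text{for } 0 \leq j \leq k,
\end{displaymath}
with $C$ independent of $Q$ by translation invariance.

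To conclude, fix $Q \in \QQ$ and let $p_Q \in \mathbb{P}_{k-1}$ be the local polynomial approximation of $v$ provided by the Bramble--Hilbert lemma on $\omega_Q$, which gives $\|v - p_Q\|_{W^{j,p}(\omega_Q)} \leq C \|\D^k v\|_{L^p(\omega_Q)}$ for $0 \leq j \leq k$. Polynomial reproduction gives $\Jtil v - v = \Jtil(v - p_Q) - (v - p_Q)$ on $Q$, so local stability then yields
\begin{displaymath}
  \|\D^j(\Jtil v - v)\|_{L^p(Q)} \leq C \|\D^k v\|_{L^p(\omega_Q)}.
\end{displaymath}
Summing $p$-th powers over $Q \in \QQ$, with the obvious modification for $p = \infty$, and using the uniformly bounded overlap of the patches $\{\omega_Q\}_{Q \in \QQ}$ yields the claimed global estimate. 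The one delicate ingredient is the cubic reproduction of the first step, where the hypothesis $k \leq 4$ enters through Lemma \ref{th:Itil_quad}: for a general $\zzz$ satisfying (Z1)--(Z4), no polynomial of degree greater than three need lie in the span of shifts of $\zzz$, so the Cl\'{e}ment construction cannot yield a higher-order estimate.
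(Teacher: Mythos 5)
Your proposal is correct and follows essentially the same route as the paper: both are the standard Cl\'ement argument, resting on the bi-orthogonality of $\zzz^*$ (so that $\Jtil$ is a projector onto the span of shifts of $\zzz$), on Lemma \ref{th:Itil_quad} to place all cubics in that span, on a local stability/inverse estimate on each cell, and on a Bramble--Hilbert polynomial subtraction followed by summation over cells with bounded overlap of the patches. The only cosmetic difference is that you fix a specific Bramble--Hilbert polynomial $p_Q$ where the paper minimizes the right-hand side over all cubics, which is the same step.
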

\begin{proof}
  For any function $\tilde{w} = \sum_{\xi \in \Z^d} w(\xi) \zzz(x -
  \xi)$, the definition of $\Jtil$ guarantees that $\Jtil \tilw =
  \tilw$, that is, $\Jtil$ is a projector.

  We begin by applying a triangle inequality. For any $Q \in \QQ$, $0
  \leq j \leq k \leq 4$ and $\tilw, w \in \Us$, we have
  \begin{equation}
    \label{eq:Jtil_prf_10}
    \| \D^j(\Jtil v - v) \|_{L^p(Q)} \leq\, \| \D^j (\Jtil v - \tilw)
    \|_{L^p(Q)} + \| \D^j (\tilw - v) \|_{L^p(Q)}. 
  \end{equation}
  
  Using the inverse estimate \eqref{eq:embedding_local}, and a
  straightforward computation (or, a local norm-equivalence argument)
  we obtain
  \begin{displaymath}
    \| \D^j (\Jtil v - \tilw)
    \|_{L^p(Q)} = \| \D^j \Jtil(v - \tilw) \|_{L^p(Q)} \leq C \|
    \Jtil(v-\tilw)\|_{L^p(Q)} \leq C \| \Jtil(v-\tilw) \|_{\ell^p(\Z^d
    \cap \tilde\omega_Q')},
  \end{displaymath}
  where $\tilde\omega_Q' := \bigcup_{\xi\in\Z^d\cap Q}
  \tilde\omega_\xi$.  For each $\xi \in \Z^d \cap \tilde\omega_Q'$, we
  have
  \begin{equation}
    \label{eq:Jtil_prf_25}
    \b| \Jtil(v-\tilw)(\xi)\b| = \bg|\int_{\tilde\omega_\xi} \zzz^*(x-\xi)
    (v - \tilw)(x) \dx\bg| \leq C \| v - \tilw \|_{L^p(\tilde\omega_\xi)}.
  \end{equation}

  Combining \eqref{eq:Jtil_prf_25} with \eqref{eq:Jtil_prf_10}, we
  obtain 
  \begin{displaymath}
    \| \D^j(\Jtil v - v) \|_{L^p(Q)} \leq C \b( \| v - \tilw
    \|_{L^p(\tilde\omega_Q)} + \| \D^j (v - \tilw) \|_{L^p(Q)} \b),
  \end{displaymath}
  where $\tilde\omega_Q \supset \tilde\omega_Q'$ is a compact set of
  uniformly bounded diameter. According to Lemma \ref{th:Itil_quad} we
  may choose $\tilw$ to be any cubic polynomial. Minimizing the
  right-hand side over all cubics yields
 \begin{displaymath}
    \| \D^j(\Jtil v - v) \|_{L^p(Q)} \leq C \| \D^k v \|_{L^p(\tilde\omega_Q)}.
 \end{displaymath}
 Summing over all $Q \in \QQ$ and estimating the overlap of the sets
 $\tilde\omega_Q$ completes the proof.
\end{proof}





\bibliographystyle{plain}
\bibliography{qc}

\end{document}